\crefname{hypothesis}{Hypothesis}{Hypotheses}
\title{A symmetric fractional-order reduction method for direct nonuniform approximations of semilinear diffusion-wave equations\thanks{\funding{This work was partially supported by the Fundamental Research Funds for the Central Universities (JBK2102010), the National Natural Science Foundation of China (12071373), The Science and Technology Development Fund, Macau SAR (File no. 0005/2019/A)  and the grant MYRG2018-00047-FST from University of Macau.}}}
\author{Pin Lyu\thanks{School of Economic Mathematics, Southwestern University of Finance and Economics, Chengdu, China.
  (\email{plyu@swufe.edu.cn}).}
\and Seakweng Vong\thanks{Corresponding author. Department of Mathematics, University of Macau, Macao, China.
  (\email{swvong@um.edu.mo}).}}
\newcommand{\zd}{\,\mathrm{d}}
\newtheorem{example}{Example}
\begin{document}

%\date{\today}
 \maketitle\normalsize

%\tableofcontents

\begin{abstract}
We introduce a symmetric fractional-order reduction (SFOR) method to construct numerical algorithms on general nonuniform temporal meshes for semilinear fractional diffusion-wave equations. By using the novel order reduction method, the governing problem is transformed to an equivalent coupled system, where the explicit orders of time-fractional derivatives involved are all $\alpha/2$ $(1<\alpha<2)$. The linearized L1 scheme and Alikhanov scheme are then proposed on general time meshes. Under some reasonable regularity assumptions and weak restrictions on meshes, the optimal convergence is derived for the two kinds of difference schemes by $H^2$ energy method. An adaptive time stepping strategy which based on the (fast linearized) L1 and Alikhanov algorithms is designed for the semilinear diffusion-wave equations. Numerical examples are provided to confirm the accuracy and efficiency of proposed algorithms.
\end{abstract}

\begin{keywords}
 diffusion-wave equation, weak singularity, nonuniform mesh, adaptive mesh
\end{keywords}

\begin{AMS}
 65M06, 65M12, 35B65, 35R11
\end{AMS}

\section{Introduction}

In this paper, we consider numerical methods of the semilinear diffusion-wave equation:
\begin{align}\label{eq1}
 {\cal D}_t^{\alpha} u=\nu^2\Delta u+f(u,{\bf x},t),\quad {\bf x}\in\Omega,~t\in(0,T],%\\\label{eq2}
%&u({\bf x},t)=0,\quad {\bf x}\in\partial\Omega,\\\label{eq3}
%&u({\bf x},0)=\varphi({\bf x}),\quad u_t({\bf x},0)=\psi({\bf x}),\quad {\bf x}\in\bar\Omega,
\end{align}
subject to the initial conditions $u({\bf x},0)=\varphi({\bf x})$ and $u_t({\bf x},0)=\tilde\varphi({\bf x})$ for ${\bf x}\in\Omega$, and the homogeneous boundary condition $u({\bf x},t)=0$ for ${\bf x}\in\partial\Omega$; where $\Omega=(x_l,x_r)\times(y_l,y_r)$, $1<\alpha<2$, $\nu$ is a constant, and ${\cal D}_t^{\delta}$ denotes the Caputo derivative of order $\delta$:
$${\cal D}_t^{\delta}u(t):=({\cal I}^{n-\delta} u^{(n)})(t)   \quad \mbox{for}~t>0~\mbox{and}~ n-1<\delta<n, $$
in which ${\cal I}^\beta$ represents  the Riemann-Liouville fractional integral of order $\beta$:
 $${\cal I}^\beta u(t):=\int_0^t\omega_\beta(t-s)u(s)\zd s%\quad \mbox{for}~t>0
 \quad \mbox{with}\quad \omega_\beta(t)=\frac{t^{\beta-1}}{\Gamma(\beta)}.$$

 The diffusion-wave equation, which is also called the time-fractional wave equation, can be applied to describe evolution processes intermediate between diffusion and wave propagation. For example, it governs the  propagation of mechanical waves in viscoelastic media \cite{Mainardi2010,Mainardi2001}. The  practical applications of equation \eqref{eq1} span diversely many  disciplines, such as the  image processing \cite{ZhangW2014,Cuesta2012SP}, the universal electromagnetic, acoustic and mechanical response \cite{Nigmatullin}.

It is well known that the solutions of the sub-diffusion equations (also called the time-fractional diffusion equations) typically exhibit weak initial singularities \cite{JinIMA,Stynes-SIAM2017,Saka-Yama-moto}, and it causes that the traditional time-stepping methods fail to preserve their desired convergence rate \cite{JinIMA}. The same phenomenon occurs for the diffusion-wave equations. For example, Jin, Lazarov and Zhou \cite[Theorem A.4]{JinSIAMJSC2016} show that the solution of the linear diffusion-wave equation ($f=f({\bf x},t)$) satisfies that $\|\partial_t^m u \|_{L^2(\Omega)}\leq C_Tt^{\alpha-m}\|f\|_{W^{m-1,\infty}(0,T;L^2(\Omega))}$, $m=1,2$, if $f\in W^{1,\infty}(0,T;L^2(\Omega))$ and $\varphi={\tilde \varphi}=0$. Other studies on regularities can be found in \cite{JinSIAMJSC2016,MustaphaNM2007,Saka-Yama-moto}.
Recently some excellent works have been done on the numerical approximation of linear diffusion-wave equations taking the weak initial singularities into account. The convolution quadrature methods generated by backward difference formulas are rigorously discussed in \cite{JinSIAMJSC2016}, where the first- and second-order temporal convergence rates are obtained under proper assumptions of the given data, and their discrete maximal regularities are further studied by Jin, Li and Zhou  \cite{Jin-NM2018}.
Lately, for the problem with nonsmooth data, a Petrov-Galerkin method and a time-stepping discontinuous Galerkin method
are proposed
in
\cite{LL-Xie} (Luo, Li and Xie) and
\cite{LiWangXie2019JSC} (Li, Wang and Xie), where the temporal convergence rate is $(3-\alpha)/2$-order and about first-order respectively.
Numerical schemes with classical L1 approximation in time and the standard P1-element in space are also implemented in \cite{LiWangXie2019} to have the temporal accuracies of
${\cal O}(\tau^{3-\alpha})$ and ${\cal O}(\tau^2)$ provided the ratio $\tau^\alpha/h^2_{\min}$ is uniformly bounded.
We note that the numerical methods in the above works \cite{JinSIAMJSC2016,Jin-NM2018,LL-Xie,LiWangXie2019JSC,LiWangXie2019} are implemented on uniform temporal steps.
On the other hand, Mustapha \& McLean \cite{MustaphaSIAM2013} and Mustapha \& Sch\"{o}tzau  \cite{MustaphaIMA2014} considered the time-stepping discontinuous Galerkin methods on nonuniform temporal meshes to solve the following kind of fractional wave equation:
\begin{align}\label{integro-differential}
u_t+{\cal I}^\beta Au(t)=f(t),\quad  \mbox{for} \quad \beta\in(0,1)\quad  \mbox{and}\quad  t\in(0,T],
\end{align}
 where $A$ is a self-adjoint linear elliptic spatial operator. It can be observed that the above integro-differential problem is (mathematically) equivalent to the linear case of \eqref{eq1} under suitable assumptions on $f$ and initial data. Their methods are illuminating and efficient with good temporal accuracies.  Laplace transform methods and convolution quadrature methods on uniform temporal steps are also discussed respectively by McLean \& Thom$\acute{\mbox{e}}$e \cite{McLeanThomee2004,McLeanThomee2010} and Cuesta et al. \cite{Cuesta2006MC,Cuesta2003ANM,Cuesta2003SIAM} for the above integro-differential problem, where the reference \cite{Cuesta2006MC} is for the semilinear case $f(t)=f(u,\nabla u,x,t)$. However, the above numerical methods for solving \eqref{integro-differential} may not be easily  extended to the semilinear problem \eqref{eq1} due to the nonlinearity ${\cal D}_t^\beta f(u,t)$.

To the best of our knowledge, there are still challenges for  numerical methods of the diffusion-wave equation. In this paper, we will address  the following issues:  {\bf (i)} establishing and analyzing difference schemes by the classical L1 \cite{Oldham1974} and Alikhanov \cite{AAA} approximations on nonuniform temporal meshes (especially on more general meshes) for the semilinear diffusion-wave equation with typical weak singular solutions; {\bf (ii)} studying efficient numerical algorithms, such as the adaptive time-stepping algorithm, for the semilinear diffusion-wave equation in order to deal with the highly oscillatory variations in time since the problem \eqref{eq1} leads to a mixed behavior of diffusion and wave propagation.

Before introducing our main approach, we review two classical and popular algorithms. The first one is the L1 algorithm \cite{Oldham1974}, which was generated by Lagrange linear interpolation formula, it is a direct and convenient approximation formula in constructing numerical methods for sub-diffusion problems, e.g., \cite{SunWu2006,LinXu2007,YanSIAM2018} where it was employed on uniform temporal grids. Recently, the L1 method on graded temporal meshes, with monotonically increasing step sizes, was analyzed in Stynes, O'Riordan \& Gracia \cite{Stynes-SIAM2017} and Kopteva \cite{KoptevaMC2019} to resolve the sub-diffusion equations with weakly singular solutions. The other one is the Alikhanov algorithm, which was firstly proposed by Alikhanov \cite{AAA} by combining linear and quadratic interpolations skillfully  at an off-set time point on uniform mesh for the sub-diffusion problem with sufficiently smooth solution. Implementation of this algorithm on graded mesh was discussed by Chen and Stynes \cite{ChenStynesJSC2019} and  the second-order convergence concerning with the weak initial singularities was established. Particularly, Liao, Li and Zhang \cite{LiaoL1} presented a novel and technical framework to derive the optimal convergence result of the nonuniform L1 scheme. The techniques were then generalized in Liao, McLean and Zhang \cite{LiaoGronwall}, which were further extended to a linearized scheme for the semilinear sub-diffusion equations \cite{LiaoYanZhang2018} and the Alikhanov scheme on more general nonuniform meshes \cite{LiaoSecondOrder}. We remark that the above methods \cite{Stynes-SIAM2017,KoptevaMC2019,ChenStynesJSC2019,LiaoL1,LiaoGronwall,LiaoYanZhang2018,LiaoSecondOrder} on nonuniform meshes are all for sub-diffusion problems.

In view of the high efficiency and broad potential applications of the L1 and the  Alikhanov algorithms, it is of high scientific value to consider their nonuniform versions for resolving (at least) the weak initial singularities of the diffusion-wave problem. In \cite{SunWu2006}, Sun and Wu utilized a fixed-order reduction method, i.e., by taking an auxiliary function $v=u_t$, to rewrite a linear case of equation \eqref{eq1} to the following coupled equations:
 \begin{align}\label{standard-OR1}
 & {\cal D}_t^{\alpha-1} v=\nu^2\Delta u+f({\bf x},t),\\\label{standard-OR2}
& v=u_t,
 \end{align}
 for ${\bf x}\in\Omega,~t\in(0,T]$.
 We note that the time-fractional derivative on the auxiliary function $v$ in equation \eqref{standard-OR1} is of order $\alpha-1$ which belongs to $(0,1)$, so the system \eqref{standard-OR1}--\eqref{standard-OR2} is not structure consistency in the time derivative order point of view. The diffusion-wave equation can be solved following the standard  framework of the L1 method on uniform temporal meshes.  Although it is easy to extend the above order reduction method to the corresponding nonuniform L2 scheme (see \cite{SunWu2006} for its uniform version), we find that it may be difficult to establish its stability and convergence on more general time meshes. Therefore, for the first time, we present a new order reduction method by introducing a novel auxiliary function
 $${\bf v}= {\cal D}_t^{\frac{\alpha}{2}} {\bf u},$$
 where ${\bf u}=u-t{\tilde \varphi}$, which is a non-fixed-order reduction technique, and we call the symmetric fractional-order reduction (SFOR) method.
 The semilinear diffusion-wave equation \eqref{eq1} is then skillfully rewritten  as coupled equations with nice structure or having the feature of structure consistency, i.e. \eqref{eq-new1}--\eqref{eq-new2}, see  Section \ref{SFOR} for more details.
 Basing on this equivalent formulation, we can construct the implicit and linearized L1 and Alikhanov algorithms on possible nonuniform time partitions $0=t_0<t_1<\cdots<t_N=T$ for a given positive integer $N$, and discuss their unconditional convergence by utilizing the framework of \cite{LiaoL1,LiaoGronwall,LiaoSecondOrder,LiaoYanZhang2018}.
 Throughout this paper we assume that the solution satisfies the following regularity:
  \begin{align}\label{regularity}
 &\|\partial_t^{(k)}u\|_{H^4(\Omega)}\leq C_u(1+t^{\sigma_1-k}) \quad \mbox{and}\quad  \|\partial_t^{(k)}{\bf v}\|_{H^4(\Omega)}\leq C_u(1+t^{\sigma_2-k}) ,\quad k=0,1,2,3,
 \end{align}
 for $t\in(0,T]$, where $\sigma_1\in(1,2)\cup(2,3)$ and $\sigma_2\in(\alpha/2,1)\cup(1,2)$ are two regularity parameters.
 Our analysis are   under the weak mesh assumption:
 \begin{itemize}
 \item[{\bf MA.}] There is a constant $C_\gamma>0$ such that $\tau_k\leq C_\gamma\tau\min\{1,t_k^{1-1/\gamma}\}$ for $1\leq k\leq N$, with $t_k\leq C_\gamma t_{k-1}$ and $\tau_k/t_k\leq C_\gamma\tau_{k-1}/t_{k-1}$ for $2\leq k\leq N$,
 \end{itemize}
  where $\gamma\geq1$ is the mesh parameter,   $\tau_k:=t_k-t_{k-1}$ denotes the $k$-th time step size for $1\leq k\leq N$  and $\tau:=\max_{1\leq k\leq N}\{\tau_k\}$.

 We prove that  our method can achieve the desired optimal temporal convergence orders  (see Theorem \ref{Convergence}), that is  ${\cal O}(\tau^{\min\{2-\frac{\alpha}{2},\gamma\sigma_1,\gamma\sigma_2\}})$ for the nonuniform L1 algorithm and  ${\cal O}(\tau^{\min\{2,\gamma\sigma_1,\gamma\sigma_2\}})$ for the nonuniform Alikhanov algorithm.
 We note that the sum-of-exponentials approximations \cite{FL1,Liao-AC2} can also be directly adopted to the proposed nonuniform L1 and Alikhanov algorithms to reduce the memory storage and computational costs. We further design an adaptive time-stepping strategy according to the two kinds of algorithms, which is robust and accurate for dealing with not only the weak initial singularities but also the rapid temporal oscillations of the semilinear diffusion-wave problem.

The main contributions of this paper are summarized below:
\begin{itemize}
\item We propose a novel order reduction method (SFOR) which enables the nonuniform L1 and Alikhanov algorithms for the semilinear diffusion-wave equation can be constructed and analyzed.
\item Based on some reasonable regularity assumptions and weak mesh restrictions, we obtain the optimal convergence orders: the temporal convergence rate is up to $(2-\alpha/2)$-order for the L1 algorithm and second-order for the Alikhanov algorithm.
\item An adaptive time-stepping strategy is designed for the semilinear diffusion-wave equation to efficiently resolve  possible oscillations of the solution.
\end{itemize}

The rest of the paper is organized as follows. In section \ref{SFOR}, we present the novel SFOR method and equivalently rewrite the semilinear diffusion-wave equation into coupled equations. In Section \ref{Algorithms}, we construct and analyze the linearized nonuniform L1 and the nonuniform Alikhanov algorithms, and obtain their optimal convergences unconditionally by $H^2$ energy method. Furthermore, we design an adaptive time-stepping method by combining the proposed nonuniform (fast linearized) L1 and Alikhanov algorithms. Numerical examples are provided in Section \ref{Numerical} to demonstrate the accuracy and efficiency. A brief conclusion is followed in Section \ref{Conclusion}, and the analysis of truncation errors is given in Section \ref{Appendix}.

 Throughout the paper, we use $C$ to denote a generic constant which may depends on the data of the governing problem but is independent of time and space step sizes (or nodes). %Besides, the symbols $(\cdot,\cdot)$, $\|\cdot\|_{L^2}$ and $\|\cdot\|_{L^\infty}$ represent the standard inner product, $L^2$-norm and $L^\infty$-norm in the Hilbert space, respectively.

\section{The SFOR method}\label{SFOR}

In this section, we propose a symmetric fractional-order reduction (SFOR) method  such that the technical  framework proposed in \cite{LiaoL1,LiaoGronwall,LiaoSecondOrder,LiaoYanZhang2018} can be adopted to analyze implicit numerical schemes
for solving the diffusion-wave equation  \eqref{eq1} on temporal nonuniform mesh.

The basic idea of the SFOR method is presented in the following lemma.
\begin{lemma}\label{sym-reduction}
For $\alpha\in(1,2)$ and $u(t)\in {\cal C}^1([0,T])\cap{\cal C}^2((0,T])$, it holds that
\begin{align}\label{reduction1}
{\cal D}_t^{\alpha}u(t)={\cal D}_t^{\frac{\alpha}{2}}\left({\cal D}_t^{\frac{\alpha}{2}} u(t)\right)-u'(0)\omega_{2-\alpha}(t).
\end{align}
Moreover, if we take ${\bf u}(t):=u(t)-tu'(0)$, then
\begin{align}\label{reduction2}
{\cal D}_t^\alpha {u}(t)={\cal D}_t^\alpha {\bf u}(t)={\cal D}_t^{\frac{\alpha}{2}}\left({\cal D}_t^{\frac{\alpha}{2}} {\bf u}(t)\right).
\end{align}
\end{lemma}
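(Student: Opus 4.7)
The plan is to unfold both sides via the Riemann--Liouville integral representation and to reduce the composition identity to the semigroup property ${\cal I}^a{\cal I}^b = {\cal I}^{a+b}$ together with the kernel identity $\omega_a*\omega_b = \omega_{a+b}$. Since $\alpha/2\in(1/2,1)$ and $\alpha\in(1,2)$, the Caputo definition gives ${\cal D}_t^{\alpha/2}u = {\cal I}^{1-\alpha/2}u'$ and ${\cal D}_t^\alpha u = {\cal I}^{2-\alpha}u''$, so \eqref{reduction1} is equivalent to
$$
{\cal I}^{1-\alpha/2}\!\Bigl(\frac{d}{dt}\,{\cal I}^{1-\alpha/2}u'\Bigr)(t) = {\cal I}^{2-\alpha}u''(t) + u'(0)\,\omega_{2-\alpha}(t).
$$

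First I would set $v(t):={\cal D}_t^{\alpha/2}u(t)$ and compute $v'(t)$. Using the substitution $r=t-s$ to relocate the singular kernel to a fixed endpoint, I write $v(t)=\int_0^t\omega_{1-\alpha/2}(r)\,u'(t-r)\zd r$. Differentiating with respect to $t$, justified by $u\in{\cal C}^1([0,T])\cap{\cal C}^2((0,T])$ together with a standard truncation argument near $r=0$, produces the boundary contribution from $u'(0)$, yielding
$$
v'(t) = \omega_{1-\alpha/2}(t)\,u'(0) + ({\cal I}^{1-\alpha/2}u'')(t).
$$

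Applying ${\cal I}^{1-\alpha/2}$ to $v'$ and invoking linearity, the semigroup property ${\cal I}^{1-\alpha/2}{\cal I}^{1-\alpha/2}={\cal I}^{2-\alpha}$, and the Beta-function identity $({\cal I}^{1-\alpha/2}\omega_{1-\alpha/2})(t)=(\omega_{1-\alpha/2}*\omega_{1-\alpha/2})(t)=\omega_{2-\alpha}(t)$, I obtain
$$
{\cal D}_t^{\alpha/2}\bigl({\cal D}_t^{\alpha/2}u\bigr)(t) = u'(0)\,\omega_{2-\alpha}(t) + {\cal D}_t^\alpha u(t),
$$
which is \eqref{reduction1} after rearrangement. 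For \eqref{reduction2}, observe that ${\bf u}(t):=u(t)-tu'(0)$ satisfies ${\bf u}''(t)=u''(t)$ on $(0,T]$ (so ${\cal D}_t^\alpha{\bf u}={\cal I}^{2-\alpha}{\bf u}''={\cal D}_t^\alpha u$) and ${\bf u}'(0)=0$; applying \eqref{reduction1} to ${\bf u}$ then makes the correction term vanish.

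The main delicacy is the differentiation step that isolates the boundary term $\omega_{1-\alpha/2}(t)u'(0)$, since $\omega_{1-\alpha/2}$ is singular at the origin. A clean way to justify it is to split $v=\omega_{1-\alpha/2}*u'=\omega_{1-\alpha/2}*(u'-u'(0))+u'(0)\,\omega_{2-\alpha/2}$ and use $\omega_{2-\alpha/2}'=\omega_{1-\alpha/2}$: the first summand has an integrand vanishing at $r=0$ and differentiates to $({\cal I}^{1-\alpha/2}u'')(t)$, while the second directly contributes the boundary term. After this, everything else is routine algebra with the kernels $\omega_\beta$ and operators ${\cal I}^\beta$.
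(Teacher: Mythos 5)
Your proposal is correct and follows essentially the same route as the paper: both compute $v'(t)=u'(0)\omega_{1-\alpha/2}(t)+({\cal I}^{1-\alpha/2}u'')(t)$ for $v={\cal D}_t^{\alpha/2}u$, apply ${\cal I}^{1-\alpha/2}$ with the semigroup property and $\omega_{1-\alpha/2}*\omega_{1-\alpha/2}=\omega_{2-\alpha}$, and obtain \eqref{reduction2} from ${\bf u}'(0)=0$, ${\bf u}''=u''$. The only cosmetic difference is that the paper isolates the boundary term by integrating by parts before differentiating, whereas you differentiate the change-of-variable form (or split off $u'(0)$); these are equivalent justifications of the same step.
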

\begin{proof}
Taking $v(t):={\cal D}_t^{\frac{\alpha}{2}} u(t)$, one has
\begin{align*}%\nonumber
v={\cal D}_t^{\frac{\alpha}{2}} u(t)=({\cal I}^{\frac{\alpha}{2}}u')(t)=&\int_0^t\omega_{1-\frac{\alpha}{2}}(t-s)u'(s)\zd s\\%\nonumber
=&-u'(s)\omega_{2-\frac{\alpha}{2}}(t-s)|_0^t+\int_0^t\omega_{2-\frac{\alpha}{2}}(t-s)u''(s)\zd s\\
=&u'(0)\omega_{2-\frac{\alpha}{2}}(t)+\int_0^t\omega_{2-\frac{\alpha}{2}}(t-s)u''(s)\zd s,
\end{align*}
where the integration by parts has been utilized.
Then
\begin{align*}
v_t=\frac{\zd}{\zd t}{\cal D}_t^{\frac{\alpha}{2}} u(t)
=u'(0)\omega_{1-\frac{\alpha}{2}}(t)+\int_0^t\omega_{1-\frac{\alpha}{2}}(t-s)u''(s)\zd s=u'(0)\omega_{1-\frac{\alpha}{2}}(t)+({\cal I}^{1-\frac{\alpha}{2}}u'')(t).
\end{align*}
Hence, using the composition property ${\cal I}^p{\cal I}^q g(t)={\cal I}^{p+q}g(t)~(p,q>0)$ \cite[pp. 59]{Podnubny}, we get
\begin{align*}%\nonumber
{\cal D}_t^{\frac{\alpha}{2}}\left({\cal D}_t^{\frac{\alpha}{2}} u(t)\right)={\cal D}_t^{\frac{\alpha}{2}}v(t)=({\cal I}^{1-\frac{\alpha}{2}}v')(t)=&
u'(0)({\cal I}^{1-\frac{\alpha}{2}}\omega_{1-\frac{\alpha}{2}})(t)+{\cal I}^{1-\frac{\alpha}{2}}{\cal I}^{1-\frac{\alpha}{2}}u''(t)\\%\nonumber
=&u'(0)\omega_{2-\alpha}(t)+{\cal I}^{2-\alpha}u''(t)\\
=&u'(0)\omega_{2-\alpha}(t)+{\cal D}_t^{\alpha}u(t),
\end{align*}
implying \eqref{reduction1} is true.

The equality \eqref{reduction2} can be obtained directly by taking ${\bf v}:={\cal D}_t^{\frac{\alpha}{2}}{\bf u}(t)$ in the above derivations.
\end{proof}
Now we take
$${\bf u}({\bf x},t):=u({\bf x},t)-t{\tilde \varphi}({\bf x})\quad
\mbox{and} \quad {\bf v}({\bf x},t):={\cal D}_t^{\frac{\alpha}{2}}{\bf u}({\bf x},t).$$
 From \eqref{regularity} and using the Sobolev embedding theorem, we have $\|{\bf u}_t({\bf x},t)\|_\infty\leq C(1+t^{\sigma_1-1})$ for $t\in(0,T]$. Then utilizing the Comparison theorem for integrals (see pp. 400--401 in \cite{Zorich}), one has
  \begin{align*}
  |{\bf v}({\bf x},0)|=&\left|\lim_{t\rightarrow 0}  {\cal D}_t^{\frac{\alpha}{2}} {\bf u}({\bf x},t)\right|\leq
  \frac1{\Gamma(1-\frac{\alpha}{2})}\lim_{t\rightarrow 0} \int_0^t (t-s)^{-\frac{\alpha}{2}}\left|{\bf u}_t({\bf x},s) \right|\zd s\\
  \leq &C\lim_{t\rightarrow 0} \int_0^t (t-s)^{-\frac{\alpha}{2}}(1+s^{\sigma_1-1}) \zd s
  \leq  C\lim_{t\rightarrow 0}(t^{1-\frac{\alpha}{2}}+t^{\sigma_1-\frac{\alpha}{2}})
  =0,
  \end{align*}
which gives ${\bf v}({\bf x},0)=0.$

Thus, by Lemma \ref{sym-reduction}, the equation \eqref{eq1} can be equivalently solved by the following coupled equations:
\begin{align}\label{eq-new1}
& {\cal D}_t^{\frac{\alpha}{2}} {\bf v}=\nu^2\Delta {\bf u}+{f}(u,{\bf x},t)+t\Delta{\tilde \varphi},\quad {\bf x}\in\Omega,~t\in(0,T],\\\label{eq-new2}
& {\bf v}={\cal D}_t^{\frac{\alpha}{2}}{\bf u},\quad {\bf x}\in\Omega,~t\in(0,T],
\end{align}
provided $u={\bf u}+t{\tilde \varphi}$, the initial conditions ${\bf u}({\bf x},0)=\varphi({\bf x}),~ {\bf v}({\bf x},0)=0$ for ${\bf x}\in\Omega$, and boundary conditions ${\bf u}({\bf x},t)={\bf v}({\bf x},t)=0$ for ${\bf x}\in\partial\Omega$.

One can observe that, by utilizing the proposed SFOR method, the explicit orders of the time-fractional derivatives in the resulting coupled equations \eqref{eq-new1} and \eqref{eq-new2} are all  $\alpha/2$. Therefore, they can be discretized by the same strategy (e.g., the L1 or Alikhanov approximations).
\begin{remark}
We observe from our numerical experiments that, by extracting the singular term $u'(0)\omega_{2-\alpha}(t)$ in \eqref{reduction1}, the proposed algorithms will have more regular accuracy due to the regularity of the remaining part. This is the reason why we define the auxiliary function ${\bf v}={\cal D}_t^{\frac{\alpha}{2}}{\bf u}$ with ${\bf u}=u-t{\tilde \varphi}$, instead of ${v}={\cal D}_t^{\frac{\alpha}{2}}{u}$.
\end{remark}

\section{Numerical algorithms}\label{Algorithms}
\subsection{Preliminary}

 Our main concern is the time approximation of \eqref{eq1}.
 Here and hereafter, $g^k$ and $g_h^k$ denotes the numerical approximations of $g(t_k)$ and $g({\bf x}_h,t_k)$, respectively. Define the off-set time points and grid functions
 $$t_{n-\theta}:=\theta t_{n-1}+(1-\theta)t_n\quad \mbox{and}\quad g^{n-\theta}:=\theta g^{n-1}+(1-\theta)g^n,\quad 1\leq n\leq N.$$

 Denote $\beta:=\alpha/2$. The Caputo derivative ${\cal D}_t^{\beta} g(t_{n-\theta})$ can be formally approximated by the following  discrete Caputo derivative with convolution structure:
 \begin{align}\label{dis-Caputo}
 ({\cal D}_\tau^{\beta} g)^{n-\theta}:=\sum_{k=1}^nA_{n-k}^{(n)}\nabla_{\tau} g^k, \quad \mbox{where}~\nabla_{\tau} g^k=g^k-g^{k-1}.
 \end{align}
The general discretization \eqref{dis-Caputo} includes two practical ones. It leads to the L1 formula while $\theta=0$ (see also \eqref{L1-discretization})  and yields the Alikhanov formula while $\theta=\beta/2$ (see also \eqref{AAA-discretization}).
 To efficiently solve the semilinear diffusion-wave equation with possible weak singular or more complicated solutions,
 we next give more explicit formulations of these two classical approximations on possible nonuniform meshes, which have also been rigorously studied in \cite{LiaoL1,LiaoGronwall,LiaoSecondOrder}.

{\bf Nonuniform L1 formula.} The L1 formula on general mesh for the approximation of the Caputo derivative ${\cal D}_t^{\beta}g(t_n)$ is given as:
\begin{align}\label{L1-discretization}
({\cal D}_\tau^{\beta}g)^n:=\sum_{k=1}^n\int_{t_{k-1}}^{t_k}\omega_{1-\beta}(t_n-s)(\Pi_{1,k}g(s))'\zd s=\sum_{k=1}^n A_{n-k}^{(n)}\nabla_\tau g^k,
\end{align}
where $\Pi_{1,k}$ represents the linear interpolation operator, and
\begin{align}\label{L1coe}
A_{n-k}^{(n)}:=\int_{t_{k-1}}^{t_k}\frac{\omega_{1-\beta}(t_n-s)}{\tau_k}\zd s.
\end{align}

 {\bf Nonuniform Alikhanov formula.} Denote $\theta:=\beta/2=\alpha/4$, and define the discrete coefficients
\begin{align*}
&a_{n-k}^{(n)}:=\frac1{\tau_k}\int_{t_{k-1}}^{\min\{t_k,t_{n-\theta}\}}\omega_{1-\beta}(t_{n-\theta}-s)\zd s ,~1\leq k\leq n;\\
&b_{n-k}^{(n)}:=\frac2{\tau_k(\tau_k+\tau_{k+1})}\int_{t_{k-1}}^{t_k}\omega_{1-\beta}(t_{n-\theta}-s)(s-t_{k-\frac12})\zd s ,~1\leq k\leq n-1.
\end{align*}
Referring to \cite{LiaoSecondOrder}, the Alikhanov formula on general mesh for the approximation of the Caputo derivative ${\cal D}_t^{\beta}g(t_{n-\theta})$ is
\begin{align}\nonumber
({\cal D}_\tau^{\beta}g)^{n-\theta}:=&\sum_{k=1}^{n-1}\int_{t_{k-1}}^{t_k}\omega_{1-\beta}(t_{n-\theta}-s)(\Pi_{2,k}g(s))'\zd s+\int_{t_{n-1}}^{t_{n-\theta}}\omega_{1-\beta}(t_{n-\theta}-s)(\Pi_{1,n}g(s))'\zd s\\\label{AAA-discretization}
=&\sum_{k=1}^n A_{n-k}^{(n)}\nabla_\tau g^k,
\end{align}
where $\Pi_{2,k}$ denotes the quadratic interpolation operator, and the discrete convolution kernels $A_{n-k}^{(n)}$ here are given as follows: $A_0^{(1)}:=a_0^{(1)}$ for $n=1$, and
\begin{align}\label{L2coe}
A_{n-k}^{(n)}:=\left\{\begin{array}{ll}
a_0^{(n)}+\rho_{n-1}b_1^{(n)},\quad & k=n,\\
a_{n-k}^{(n)}+\rho_{k-1}b_{n-k+1}^{(n)}-b_{n-k}^{(n)}, & 2\leq k\leq n-1,\\
a_{n-1}^{(n)}-b_{n-1}^{(n)},  & k=1,
\end{array}\right.\quad \mbox{for}~n\geq 2,
\end{align}
with $\rho_k:=\tau_k/\tau_{k+1}$  and $\rho:=\max_{k}\{\rho_k\}$ being the local time step-size ratios and the maximum ratio, respectively.
\begin{remark}
In the rest of this paper, we will use the general form \eqref{dis-Caputo} to represent the nonuniform L1 formula  and Alikhanov formula.
The discrete coefficients $A_{n-k}^{(n)}$ and the related properties studied later  correspondingly refer to those of the nonuniform L1 formula and the Alikhanov formula while $\theta=0$ and $\theta=\beta/2$, respectively.
\end{remark}

The following two basic properties have been verified in \cite{LiaoGronwall,LiaoSecondOrder} for the discrete coefficients of the nonuniform L1 formula (with $\pi_A=1$) and the nonuniform Alikhanov formula (with $\pi_A=11/4$ and $\rho=7/4$), which are  required in the numerical analysis of corresponding algorithms:
\begin{itemize}
\item[{\bf A1.}] The discrete kernels are positive and monotone: $A_0^{(n)}\geq A_1^{(n)}\geq \cdots\geq A_{n-1}^{(n)}>0$;
\item[{\bf A2.}] There is a constant $\pi_A>0$ such that $A_{n-k}^{(n)}\geq \frac1{\pi_A}\int_{t_{k-1}}^{t_k}\frac{\omega_{1-\beta}(t_n-s)}{\tau_k}\zd s$ for $1\leq k\leq n\leq N$.
\end{itemize}

With {\bf A1}--{\bf A2}, a natural and important property is valid  for the nonuniform L1 formula  \cite[proof of Theorem 2.1]{LiaoL1} and the nonuniform Alikhanov formula \cite[Corollary 2.3]{LiaoSecondOrder}:
\begin{align}\label{product-proper1}
\left\langle ({\cal D}_\tau^\beta g)^{n-\theta}, g^{n-\theta} \right\rangle \geq \frac12 \sum_{k=1}^nA_{n-k}^{(n)}\nabla_{\tau}(\|g^k\|^2) \quad \mbox{for}~1\leq n\leq N.
\end{align}

A discrete fractional Gr\"{o}nwall inequality proposed in \cite[Theorem 3.1]{LiaoGronwall} is a crucial tool in the numerical analysis of fractional problems. As required in the analysis later, we present a slightly modified version in the following. It is easy to trace the proof of  \cite[Theorem 3.1]{LiaoGronwall} to justify the modification, here we skip its trivial derivations.
\begin{lemma}\label{gronwall-lemma}
Let $(g^n)_{n=1}^N$ and $(\lambda_l)_{l=0}^{N-1}$ be given nonnegative sequences. Assume that there exists a constant $\Lambda$ (independent of the step sizes) such that $\Lambda\geq\sum_{l=0}^{N-1}\lambda_l$, and that the maximum step size satisfies
$$\max_{1\leq n\leq N}\tau_n\leq \frac1{{^\beta\sqrt{4\pi_A\Gamma(2-\beta)\Lambda}}}.$$
Then, for any nonnegative sequence $(v^k)_{k=0}^N$ and $(w^k)_{k=0}^N$  satisfying
\begin{align*}%\label{gronwall1}
\sum_{k=1}^nA_{n-k}^{(n)}\nabla_{\tau} \left[(v^k)^2+(w^k)^2 \right] \leq \sum_{k=1}^n \lambda_{n-k}\left(v^{k-\theta}+w^{k-\theta}\right)^2+ (v^{n-\theta}+w^{n-\theta})g^{n}, ~ 1\leq n\leq N,
\end{align*}
it holds that
\begin{align}\label{gronwall2}
v^n+w^n\leq  4E_{\beta}(4\max(1,\rho)\pi_A\Lambda t_n^\beta)\left(v^0+w^0+\max_{1\leq k\leq n}\sum_{j=1}^kP_{k-j}^{(k)}g^{j} \right)\quad \mbox{for}~1\leq n\leq N,
\end{align}
where $E_{\beta}(z)=\sum_{k=0}^\infty\frac{z^k}{\Gamma(1+k\beta)}$ is the Mittag-Leffler function.
\end{lemma}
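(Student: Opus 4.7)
The plan is to reduce the bivariate inequality to the scalar one-sequence version stated as Theorem 3.1 of \cite{LiaoGronwall}, which the authors explicitly point to as the source to be traced. To that end, introduce the combined sequence $V^n := \sqrt{(v^n)^2+(w^n)^2}$, so $(V^n)^2 = (v^n)^2+(w^n)^2$ and $v^n + w^n \leq \sqrt{2}\,V^n$. Two elementary inequalities do all the work. First, the convexity of $x\mapsto x^2$ gives
$(v^{k-\theta})^2 + (w^{k-\theta})^2 \leq \theta\bigl[(v^{k-1})^2+(w^{k-1})^2\bigr]+(1-\theta)\bigl[(v^k)^2+(w^k)^2\bigr]$,
which I shall denote $(V^{k-\theta})^2 := \theta(V^{k-1})^2+(1-\theta)(V^k)^2$. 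Second, Cauchy--Schwarz yields $(v^{k-\theta}+w^{k-\theta})^2\leq 2\bigl[(v^{k-\theta})^2+(w^{k-\theta})^2\bigr]\leq 2(V^{k-\theta})^2$ and $v^{n-\theta}+w^{n-\theta}\leq \sqrt{2}\,V^{n-\theta}$.

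Applying these to the hypothesis reduces it to the scalar inequality
\[
\sum_{k=1}^n A_{n-k}^{(n)}\nabla_\tau (V^k)^2 \;\leq\; \sum_{k=1}^n (2\lambda_{n-k})(V^{k-\theta})^2 \;+\; V^{n-\theta}\bigl(\sqrt{2}\,g^n\bigr),\qquad 1\leq n\leq N,
\]
which is exactly the hypothesis of \cite[Theorem 3.1]{LiaoGronwall} for the single sequence $V^n$, with the coupling coefficients $\lambda_l$ replaced by $2\lambda_l$ (so $\sum_l 2\lambda_l\leq 2\Lambda$) and with the forcing $g^n$ replaced by $\sqrt{2}\,g^n$. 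The step-size assumption $\max_n\tau_n\leq 1/\sqrt[\beta]{4\pi_A\Gamma(2-\beta)\Lambda}$ stated in the lemma is precisely the threshold produced when the doubled $\Lambda$ is plugged into the single-sequence condition of \cite{LiaoGronwall}, so no further restriction on the mesh is needed.

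Invoking \cite[Theorem 3.1]{LiaoGronwall} for $V^n$ then yields an estimate of Mittag--Leffler type,
\[
V^n \;\leq\; 2\sqrt{2}\,E_{\beta}\!\bigl(4\max(1,\rho)\pi_A\Lambda t_n^\beta\bigr)\Bigl(V^0 + \max_{1\leq k\leq n}\sum_{j=1}^k P_{k-j}^{(k)} g^{j}\Bigr),
\]
where the $\sqrt{2}$ inside the prefactor comes from replacing $g^n$ by $\sqrt{2}g^n$, and the factor $4$ inside $E_\beta$ absorbs the doubling of $\lambda_l$. Using $V^0\leq v^0+w^0$ together with $v^n+w^n\leq \sqrt{2}\,V^n$ delivers the stated bound, the constant $4$ in \eqref{gronwall2} absorbing the product $\sqrt{2}\cdot 2\sqrt{2}$ that arises from the two Cauchy--Schwarz reductions.

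The only real obstacle is bookkeeping the constants: one must confirm that the prefactor $4$ of \eqref{gronwall2} and the argument $4\max(1,\rho)\pi_A\Lambda t_n^\beta$ of $E_\beta$ are robust under the substitutions $\lambda_l\mapsto 2\lambda_l$, $g^n\mapsto\sqrt{2}\,g^n$, $V^n\mapsto (v^n+w^n)/\sqrt{2}$. The mechanics of the single-sequence proof in \cite{LiaoGronwall}---multiplying by the complementary convolution kernels $P^{(n)}_{n-j}$ (which invert $A^{(j)}_{j-k}$ in the sense $\sum_{j=k}^n P^{(n)}_{n-j}A^{(j)}_{j-k}=1$), summing, handling the linear forcing term by a Young-type splitting, absorbing the resulting $V^n$ coefficient into the left via the step-size bound, and closing via the scalar discrete fractional Gr\"onwall estimate with Mittag--Leffler control---go through verbatim, which is why the authors say the modification is trivial.
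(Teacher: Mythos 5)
The paper offers no proof of this lemma: it simply asserts that one can ``trace the proof of \cite[Theorem 3.1]{LiaoGronwall}'' and that the modifications (two sequences, the $\theta$-offset points, the resulting constants $4$ and $\max(1,\rho)$) are trivial. Your idea of instead reducing the two-sequence statement to the scalar one via $V^n:=\sqrt{(v^n)^2+(w^n)^2}$ is a genuinely different and in principle cleaner route, but as written it has a real flaw in the convexity step. You define $(V^{k-\theta})^2:=\theta(V^{k-1})^2+(1-\theta)(V^k)^2$; this is \emph{not} the square of $V^{k-\theta}:=\theta V^{k-1}+(1-\theta)V^k$, which is what ``$V^{k-\theta}$'' must mean if your scalar inequality is to be ``exactly the hypothesis'' of the cited theorem (compare the paper's own convention $g^{n-\theta}=\theta g^{n-1}+(1-\theta)g^n$). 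Jensen gives $(\theta V^{k-1}+(1-\theta)V^k)^2\leq\theta(V^{k-1})^2+(1-\theta)(V^k)^2$, i.e.\ your quantity is the \emph{larger} one, so bounding the right-hand side by $2\lambda_{n-k}$ times your quantity does not place you inside the hypothesis of the scalar theorem; bridging the two costs a factor $1/\min(\theta,1-\theta)$, which would change $\Lambda$, the step-size threshold, and the argument of $E_\beta$ away from the stated constants. The repair is to use the triangle inequality for the Euclidean norm of the pair rather than convexity of $x^2$: since $(v^{k-\theta},w^{k-\theta})=\theta(v^{k-1},w^{k-1})+(1-\theta)(v^k,w^k)$ and $V^j$ is the Euclidean norm of $(v^j,w^j)$, one has $\sqrt{(v^{k-\theta})^2+(w^{k-\theta})^2}\leq\theta V^{k-1}+(1-\theta)V^k=V^{k-\theta}$, whence $(v^{k-\theta}+w^{k-\theta})^2\leq 2(V^{k-\theta})^2$ and $v^{n-\theta}+w^{n-\theta}\leq\sqrt{2}\,V^{n-\theta}$ hold with the correct meaning of $V^{k-\theta}$ and with exactly your constants $2\lambda_l$ and $\sqrt{2}\,g^n$.

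A second caveat: Theorem 3.1 of \cite{LiaoGronwall} is stated for the non-offset case ($(v^k)^2$ and $v^n$ on the right, no $\theta$), so the offset scalar version you invoke as a black box --- the one producing the $\max(1,\rho)$ factor --- is itself already part of the ``modification'' this lemma claims; it cannot be extracted from the $\theta=0$ statement by substitution alone and genuinely requires re-tracing the kernel-multiplication argument you sketch in your last paragraph (this is where $\rho$ enters, through $V^{n-\theta}\leq\max(1,\rho)$-type control of the $V^{n-1}$ contribution). With the triangle-inequality fix, and starting from the offset scalar version rather than the literal Theorem 3.1, your reduction is correct and the constant bookkeeping checks out: $\Lambda\mapsto 2\Lambda$ yields the stated step-size bound and the factor $4$ inside $E_\beta$, and $\sqrt{2}\cdot 2\cdot\sqrt{2}=4$ gives the prefactor.
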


The coefficients $P_{n-j}^{(n)}$ in \eqref{gronwall2} are called the complementary discrete kernels (\cite{LiaoGronwall}) which are defined on the convolution coefficients $A_{n-k}^{(n)}$ :
\begin{align}\label{Pnk}
P_0^{(n)}:=\frac1{A_0^{(n)}},\quad P_{n-j}^{(n)}:=\frac1{A_0^{(j)}}\sum_{k=j+1}^n\left(A_{k-j-1}^{(k)}-A_{k-j}^{(k)} \right)P_{n-k}^{(n)},\quad 1\leq j\leq n-1.
\end{align}
%in fact, it is designed based on the following principal rule:
%\begin{align}\label{key-equality}
%\sum_{j=k}^n P_{n-j}^{(n)}A_{j-k}^{(j)}=1, \quad 1\leq k\leq n\leq N.
%\end{align}
%Based on \eqref{key-equality} and {\bf A1}--{\bf A2}, it holds that (\cite[Lemmma 2.1]{LiaoGronwall})
It has been shown in \cite[Lemmma 2.1]{LiaoGronwall} that the kernels satisfy
\begin{align}\label{P-proper}
0\leq P_{n-j}^{(n)}\leq \pi_A\Gamma(2-\beta)\tau_j^\beta,\quad \sum_{j=1}^nP_{n-j}^{(n)}\omega_{1-\beta}(t_j)\leq \pi_A,\quad 1\leq j\leq n\leq N.
\end{align}

\subsection{Nonuniform L1 and Alikhanov algorithms}\label{L1scheme}

We now implement linearized algorithms on temporal nonuniform meshes to solve the coupled equations \eqref{eq-new1}--\eqref{eq-new2} based on the nonuniform L1 and Alikhanov formulas.

 Some basic notations in the spatial direction are needed. The uniform spatial step sizes are denoted by $h_x:=(x_r-x_l)/M_x$ and $h_y:=(y_r-y_l)/M_y$ respectively, where $M_x,M_y$ are positive integers. The mesh space is given by $\bar\Omega_h:=\{{\bf x}_h=(x_l+ih_x,y_l+jh_y)|0\leq i\leq M_x,0\leq j\leq M_y\}$. For any grid functions $u_h:=\{u_{i,j}=u(x_i,t_j)|(x_i,t_j)\in\bar\Omega_h \}$, we employ standard five-point finite difference operator $\Delta_h:=\delta_x^2+\delta_y^2$ on $\bar\Omega_h$ to discretize the Laplacian operator $\Delta$, where
         $\delta_x^2 u_{i,j}:=(u_{i+1,j}-2u_{i,j}+u_{i-1,j})/{h_x^2}$
         and $\delta_y^2 u_{i,j}$ is defined similarly.

Denote
$$F(u_h^{n-\theta}):=f(u_h^{n-1},{\bf x}_h,t_{n-\theta})+
 (1-\theta)\partial_uf(u_h^{n-1},{\bf x}_h,t_{n-\theta})(u_h^n-u_h^{n-1}),\quad 1\leq n\leq N.$$
The linearized and implicit difference schemes which based on the L1 and the Alikhanov approximations on general nonuniform temporal meshes for the problem \eqref{eq-new1}--\eqref{eq-new2} or the problem \eqref{eq1} are constructed as follows:
\begin{align}\label{sc1}
&({\cal D}_\tau^{\beta} {\bf v}_h)^{n-\theta}=\nu^2\Delta_h {\bf u}_h^{n-\theta}+F({u}_h^{n-\theta})+ t_{n-\theta}\Delta{\tilde \varphi}_h, \quad {\bf x}\in \Omega_h, ~1\leq n\leq N;\\\label{sc2}
&  {\bf v}_h^{n-\theta}=({\cal D}_\tau^{\beta} {\bf u}_h)^{n-\theta}, \quad {\bf x}\in \Omega_h, ~1\leq n\leq N;\\\label{sc3}
& u_h^n={\bf u}_h^n+t_n{\tilde \varphi}_h, \quad {\bf x}\in \Omega_h, ~0\leq n\leq N,
\end{align}
equipped with the initial conditions ${\bf u}_h^0=\varphi({\bf x}_h)$ and $ {\bf v}_h^0=0$ for $  {\bf x}\in \Omega_h$, and the boundary conditions ${\bf u}_h^n= {\bf v}_h^n=0$ for ${\bf x}\in \partial\Omega_h,~ 1\leq n\leq N$.
\begin{remark}
The equations \eqref{sc1}--\eqref{sc3} represent two different numerical algorithms for solving the semilinear diffusion-wave equation. It is the nonuniform L1 algorithm while $\theta=0$ and is the nonuniform Alikhanov algorithm while $\theta=\beta/2=\alpha/4$.
\end{remark}
In order to analyze the two proposed  algorithms, we consider an equivalent form of \eqref{sc1}--\eqref{sc3}. Firstly, denote $w:={\cal D}_t^{\beta}{\bf v}-f(u,{\bf x},t)+t\Delta{\tilde \varphi}$ with the initial condition $w({\bf x},0):=\nu^2\Delta \varphi$ and the boundary $w({\bf x},t):=-f(0,{\bf x},t)$. Then \eqref{eq-new1}--\eqref{eq-new2} can be rewritten as
\begin{align*}
&w={\cal D}_t^{\beta}{\bf v}-f(u,{\bf x},t)+t\Delta{\tilde \varphi},\quad {\bf x}\in\Omega,~t\in(0, T];\\
&w=\nu^2\Delta {\bf u}, \quad {\bf x}\in \Omega,~t\in(0, T];\\
&{\bf v}={\cal D}_t^{\beta}{\bf u}, \quad {\bf x}\in\Omega,~t\in(0, T].
\end{align*}
Utilizing the nonuniform L1 and Alikhanov formulas and the linearized technique  to approximate the above equations, we obtain an auxiliary system of \eqref{sc1}--\eqref{sc3}:
\begin{align}\label{sc1-w}
&w_h^{n-\theta}=({\cal D}_\tau^{\beta}{\bf v}_h)^{n-\theta}-F(u_h^{n-\theta})+t_{n-\theta}\Delta{\tilde \varphi}_h,\quad {\bf x}_h\in\Omega_h,~1\leq n\leq N;\\\label{sc2-w}
&w_h^n=\nu^2\Delta_h {\bf u}_h^n, \quad {\bf x}_h\in \Omega_h,~0\leq n\leq N;\\\label{sc3-w}
&{\bf v}_h^{n-\theta}=({\cal D}_\tau^{\beta}{\bf u}_h)^{n-\theta}, \quad {\bf x}_h\in\Omega_h,~1\leq n\leq N;\\\label{sc4-w}
& u_h^n={\bf u}_h^n+t_n{\tilde \varphi}_h, \quad {\bf x}\in \Omega_h, ~0\leq n\leq N.
\end{align}
As $w_h^{n-\theta}=(1-\theta)w_h^n+\theta w_h^{n-1}$, one can see that equations \eqref{sc1}--\eqref{sc3} are equivalent to \eqref{sc1-w}--\eqref{sc4-w} by eliminating the  functions $w_h^{n-\theta}$ and $w_h^n$.

\subsection{Unconditional convergence}
In this subsection, we show the unconditional convergence of the proposed nonuniform L1 and Alikhanov algorithms \eqref{sc1}--\eqref{sc3} according to their auxiliary system \eqref{sc1-w}--\eqref{sc4-w}.

 Take $\Omega_h=\bar\Omega_h\cap \Omega$ and $\partial\Omega_h=\bar\Omega_h\cap \partial\Omega$. For $u_h,v_h$ belonging to the space of grid functions which vanish on $\partial\Omega_h$, we introduce the discrete inner product $\langle u,v \rangle:=h_xh_y\sum_{{\bf x}_h\in\Omega_h}u_hv_h$, the discrete $L_2$-norm
  $ \|u\|:=\sqrt{\langle u,u \rangle}$, the discrete $L_\infty$-norm $\|u\|_{\infty}:=\max\{|u_h|\}$,
  the discrete $H^1$ seminorms $\|\delta_x u\|$ and $\|\delta_y u\|$,
  %$$\|\delta_x u\|:=\sqrt{h_xh_y\sum_{i=1}^{M_x}\sum_{j=1}^{M_y-1}(\delta_x u_{i-\frac12,j})^2},\quad
 % \|\delta_y u\|:=\sqrt{h_xh_y\sum_{i=1}^{M_x-1}\sum_{j=1}^{M_y}(\delta_y u_{i,j-\frac12})^2}$$
 and $\|\nabla_h u\|:=\sqrt{\|\delta_x u\|^2+\|\delta_y u\|^2}$, where $\delta_xu_{i-\frac12,j}:=(u_{i,j}-u_{i-1,j})/h_x$ and similar definition works for $\delta_y u_{i,j-\frac12}$. One can easily  check that
 $\langle \Delta_h u,u \rangle=-\|\nabla_h u\|^2$, and, for some positive constants ${\tilde C}_\Omega,{\hat C}_\Omega$,
  the embedding  inequalities are valid (\cite{Liao-NMPDE2010}): $\|u\| \leq {\tilde C}_\Omega\|\nabla_h u\|$ and  $\max\{\|\nabla_h u\|,\|u\|_\infty\}\leq {\hat C}_\Omega\|\Delta_h u\|$. For simplicity of presentation, we take $h:=\max\{ h_x,h_y\}$ and $C_\Omega=\max\{{\tilde C}_\Omega,{\hat C}_\Omega\}$.

For $\vartheta\in(0,1]$, let  $U_h^{n-\vartheta}:=u({\bf x}_h,t_{n-\vartheta})$  and denote the solution errors
$${\tilde u}_h^n:=U_h^n-u_h^n={\bf u}({\bf x}_h,t_n)-{\bf u}_h^n,\quad {\tilde v}_h^n:={\bf v}({\bf x}_h,t_n)-{\bf v}_h^n,\quad \mbox{and}\quad  {\tilde w}_h^n:=w({\bf x}_h,t_n)-w_h^n.$$
One can obtain the error system of \eqref{sc1-w}--\eqref{sc4-w}:
\begin{align}\label{error_eq1}
&{\tilde w}_h^{n-\theta}=({\cal D}_\tau^{\beta}{\tilde v}_h)^{n-\theta}-{\cal N}_h^{n-\theta}-({\cal T}_f)_h^{n-\theta}+({\cal T}_{v1})_h^{n-\theta}-({\cal T}_w)_h^{n-\theta},\quad {\bf x}_h\in\Omega_h,~1\leq n\leq N;\\\label{error_eq2}
&{\tilde w}_h^n=\nu^2\Delta_h {\tilde u}_h^n+\nu^2{\cal S}_h^{n}, \quad {\bf x}_h\in \Omega_h,~1\leq n\leq N;\\\label{error_eq3}
&{\tilde v}_h^{n-\theta}=({\cal D}_\tau^{\beta}{\tilde u}_h)^{n-\theta}+({\cal T}_{u})_h^{n-\theta}-({\cal T}_{v2})_h^{n-\theta}, \quad {\bf x}_h\in\Omega_h,~1\leq n\leq N;\\\nonumber
&{\tilde u}_h^0={\tilde v}_h^0={\tilde w}_h^0=0,\quad {\bf x}_h\in\bar\Omega_h; \qquad {\tilde u}_h^n={\tilde v}_h^n=0,\quad {\bf x}_h\in\partial\Omega_h,~1\leq n\leq N.
\end{align}
where $({\cal T}_f)_h^{n-\theta}$, $({\cal T}_{v1})_h^{n-\theta}$, $({\cal T}_w)_h^{n-\theta}$, $({\cal T}_{u})_h^{n-\theta}$, $({\cal T}_{v2})_h^{n-\theta}$ and ${\cal S}_h^{n}$ are the temporal and spatial truncation errors, see more details in  Appendix (Section \ref{Appendix});
and
\begin{align*}
{\cal N}_h^{n-\theta}:=&F(U_h^{n-\theta})-F(u_h^{n-\theta})\\
=&(1-\theta) \left[ \partial_uf(u_h^{n-1},{\bf x}_h,t_{n-\theta})\nabla_\tau{\tilde u}_h^n+{\tilde u}_h^{n-1}\nabla_\tau U_h^n\int_0^1\partial^2_uf(sU_h^{n-1}+(1-s)u_h^{n-1},{\bf x}_h,t_{n-\theta})\zd s \right]\\
&+{\tilde u}_h^{n-1}\int_0^1\partial_uf(sU_h^{n-1}+(1-s)u_h^{n-1},{\bf x}_h,t_{n-\theta})\zd s.
\end{align*}
It can be deduced from \eqref{error_eq2} that
$\sum_{k=1}^nA_{n-k}^{(n)}\nabla_\tau {\tilde w}_h^k= \nu^2\sum_{k=1}^nA_{n-k}^{(n)}\nabla_\tau\left( \Delta_h {\tilde u}_h^k+{\cal S}_h^{k} \right)$. %\quad {\bf x}_h\in \Omega_h,~1\leq n\leq N.$
Then, performing  the operator $\Delta_h$ on \eqref{error_eq1} and \eqref{error_eq3}, it follows
\begin{align}\nonumber
&\Delta_h{\tilde w}_h^{n-\theta}=({\cal D}_\tau^{\beta}\Delta_h{\tilde v}_h)^{n-\theta}+\Delta_h{\cal N}_h^{n-\theta}-\Delta_h({\cal T}_f)_h^{n-\theta}+\Delta_h({\cal T}_{v1})_h^{n-\theta}-\Delta_h({\cal T}_w)_h^{n-\theta},\\\nonumber%\label{error_eq4}
&~~~~~~~~~~~~~~~~~~~~~~~~~~~~~~~~~~~~~~~~~~~~~~~~~~~~~~~~~~~~~~~~~~~~~~~~~ {\bf x}_h\in\Omega_h,~1\leq n\leq N;\\\label{error_eq5}
&({\cal D}_\tau^{\beta}{\tilde w}_h)^{n-\theta}=\nu^2({\cal D}_\tau^{\beta}\Delta_h {\tilde u}_h)^{n-\theta}+\nu^2({\cal D}_\tau^{\beta}{\cal S}_h)^{n-\theta}, \quad {\bf x}_h\in \Omega_h,~1\leq n\leq N;\\\label{error_eq6}
&\Delta_h{\tilde v}_h^{n-\theta}=({\cal D}_\tau^{\beta}\Delta_h{\tilde u}_h)^{n-\theta}+\Delta_h({\cal T}_{u})_h^{n-\theta}-\Delta_h({\cal T}_{v2})_h^{n-\theta}, \quad {\bf x}_h\in\Omega_h,~1\leq n\leq N;\\\nonumber
&{\tilde u}_h^0={\tilde v}_h^0={\tilde w}_h^0=0,\quad {\bf x}_h\in\bar\Omega_h; \qquad {\tilde u}_h^n={\tilde v}_h^n=0,\quad {\bf x}_h\in\partial\Omega_h,~1\leq n\leq N.
\end{align}
By eliminating the term $({\cal D}_\tau^{\beta}\Delta_h{\tilde u}_h)^{n-\theta}$ in \eqref{error_eq5} and \eqref{error_eq6}, we get
\begin{align}\nonumber
&\Delta_h{\tilde w}_h^{n-\theta}=({\cal D}_\tau^{\beta}\Delta_h{\tilde v}_h)^{n-\theta}+\Delta_h{\cal N}_h^{n-\theta}-\Delta_h({\cal T}_f)_h^{n-\theta}+\Delta_h({\cal T}_{v1})_h^{n-\theta}-\Delta_h({\cal T}_w)_h^{n-\theta},\\\label{error_eq7}
&~~~~~~~~~~~~~~~~~~~~~~~~~~~~~~~~~~~~~~~~~~~~~~~~~~~~~~~~~~~~~~~~~~~~~~~~~  {\bf x}_h\in\Omega_h,~1\leq n\leq N;\\\label{error_eq8}
&\frac1{\nu^2}({\cal D}_\tau^{\beta}{\tilde w}_h)^{n-\theta}=\Delta_h{\tilde v}_h^{n-\theta}-\Delta_h({\cal T}_{u})_h^{n-\theta}+\Delta_h({\cal T}_{v2})_h^{n-\theta}+({\cal D}_\tau^{\beta}{\cal S}_h)^{n-\theta}, \quad {\bf x}_h\in \Omega_h,~1\leq n\leq N;\\\nonumber%\label{error_eq9}
&{\tilde u}_h^0={\tilde v}_h^0={\tilde w}_h^0=0,\quad {\bf x}_h\in\bar\Omega_h; \qquad {\tilde u}_h^n={\tilde v}_h^n=0,\quad {\bf x}_h\in\partial\Omega_h,~1\leq n\leq N.
\end{align}

\begin{lemma}\label{Delta-F}
Let ${\cal F}(\psi({\bf x}),{\bf x})\in C^2(\mathbb{R}\times\Omega)$, and $\{\psi_h\}$ be a grid function  which satisfy \\$\max\{\|\psi\|_\infty,\|\nabla_h\psi\|,\|\Delta_h\psi\|\}\leq C_\psi$. Then there is a constant $C_F>0$ dependent on $C_\psi$ and $C_\Omega$ such that
$$\|\Delta_h[{\cal F}(\psi,{\bf x})v]\|\leq C_F\|\Delta_h v\|.$$
\end{lemma}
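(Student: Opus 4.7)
The plan is to expand $\Delta_h[{\cal F}(\psi_h,{\bf x}_h)v_h]$ via a discrete Leibniz identity and bound the three resulting pieces. Writing $g_h:={\cal F}(\psi_h,{\bf x}_h)$, a direct pointwise calculation (adding and subtracting suitable products in the expansion of $\delta_x^2(g_hv_h)$ and $\delta_y^2(g_hv_h)$) yields the discrete Leibniz rule
$$\Delta_h(g_hv_h)=g_h\Delta_hv_h+v_h\Delta_hg_h+{\cal Q}_h(g_h,v_h),$$
where ${\cal Q}_h$ collects products of one-sided first differences of $g_h$ and $v_h$. By the triangle inequality it suffices to bound each of the three groups by a constant multiple of $\|\Delta_hv_h\|$.

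For the first group, $\|\psi\|_\infty\le C_\psi$ and ${\cal F}\in C^2$ force $\|g_h\|_\infty\le M_0(C_\psi,{\cal F})$, so $\|g_h\Delta_hv_h\|\le M_0\|\Delta_hv_h\|$. For the second group, $\|v_h\Delta_hg_h\|\le\|v_h\|_\infty\|\Delta_hg_h\|$, and the paper's embedding $\|v_h\|_\infty\le C_\Omega\|\Delta_hv_h\|$ reduces the task to showing $\|\Delta_hg_h\|\le M_1$ independent of the mesh. For the cross group I would apply discrete H\"older, giving $\|{\cal Q}_h(g_h,v_h)\|\le C\|\nabla_hg_h\|_{L^4}\|\nabla_hv_h\|_{L^4}$, and control $\|\nabla_hv_h\|_{L^4}\le C\|\Delta_hv_h\|$ via the 2D discrete Sobolev embedding $H^2\hookrightarrow W^{1,4}$ (a consequence of a discrete Gagliardo--Nirenberg estimate together with the standard discrete elliptic regularity $\|\nabla_h^2v_h\|\le C\|\Delta_hv_h\|$ for grid functions vanishing on $\partial\Omega_h$); the analogous bound $\|\nabla_hg_h\|_{L^4}\le M_2$ will follow from the same embedding once $\|\Delta_hg_h\|$ is in hand.

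The main obstacle is the estimate $\|\Delta_hg_h\|\le M_1(C_\psi,C_\Omega,{\cal F})$. By Taylor expansion of ${\cal F}$ at each stencil centre, one derives a discrete chain-rule identity of the schematic form
$$\Delta_hg_h={\cal F}_\psi\,\Delta_h\psi_h+{\cal F}_{\psi\psi}\,|\nabla_h\psi_h|^2+2{\cal F}_{\psi{\bf x}}\!\cdot\!\nabla_h\psi_h+\Delta_{\bf x}{\cal F}+R_h,$$
where every coefficient is evaluated at intermediate arguments in the compact set $[-C_\psi,C_\psi]\times\bar\Omega$ and so is uniformly bounded, and $R_h$ is a consistency remainder. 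Every term is directly controlled in $L^2$ by the hypotheses $\|\Delta_h\psi\|\le C_\psi$ and $\|\nabla_h\psi\|\le C_\psi$ except the squared-gradient term $|\nabla_h\psi_h|^2$, whose $L^2$ norm equals $\|\nabla_h\psi_h\|_{L^4}^2$ and for which no pointwise bound on $\nabla_h\psi$ is available. Here I would invoke the 2D discrete Ladyzhenskaya inequality $\|w\|_{L^4}^2\le C\|w\|\,\|\nabla_h w\|$ with $w=\nabla_h\psi_h$ to obtain $\|\nabla_h\psi_h\|_{L^4}^2\le CC_\psi^2$. Collecting the three bounds yields the desired $\|\Delta_h(g_hv_h)\|\le C_F\|\Delta_hv_h\|$ with $C_F$ depending only on $C_\psi$, $C_\Omega$, and the $C^2$-norm of ${\cal F}$ on $[-C_\psi,C_\psi]\times\bar\Omega$.
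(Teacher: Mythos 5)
Your proposal is correct and follows essentially the same route as the paper, which simply defers to "routine computations" on $\delta_x{\cal F}$, $\delta_y{\cal F}$, $\delta_x^2{\cal F}$, $\delta_y^2{\cal F}$ via Taylor's formula with integral remainder — i.e., exactly the discrete Leibniz and chain rules you write down, combined with the stated discrete embeddings. The one point worth flagging is that the discrete Ladyzhenskaya/Gagliardo--Nirenberg bounds you invoke for $\|\nabla_h\psi\|_{L^4}$ and $\|\nabla_h v\|_{L^4}$ require controlling the full second differences $\delta_x^2,\delta_x\delta_y,\delta_y^2$ by $\|\Delta_h\cdot\|$, which follows from summation by parts only for grid functions vanishing on $\partial\Omega_h$; this hypothesis is left implicit in the lemma statement (and in the paper's cited source), but it is satisfied by $v$ and by $\psi=u_h^{n-1}$ in the actual application.
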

\begin{proof}
The proof can be worked out following that of  \cite[Lemma 4.1]{LiaoYanZhang2018} just by routine computations on $\delta_x {\cal F}(\psi_{i-\frac12,j},(x_{i-\frac12},y_j))$, $\delta_y {\cal F}(\psi_{i,j-\frac12},(x_{i},y_{j-\frac12}))$, $\delta_x^2{\cal F}(\psi_{i,j},(x_{i},y_j))$ and $\delta_y^2{\cal F}(\psi_{i,j},(x_{i},y_j))$  using the Taylor formula with integral remainder.
\end{proof}

We next show the unconditional convergence of the proposed linearized scheme \eqref{sc1}--\eqref{sc3} based on the $H^2$ energy method (\cite{LiaoYanZhang2018,Liao-NMPDE2010}).
\begin{theorem}\label{Convergence}
Let $f\in C^{(4,2,0)}({\mathbb R}\times\Omega\times[0,T])$. If the assumptions in \eqref{regularity} and the mesh assumption {\bf MA} hold, the linearized schemes \eqref{sc1}--\eqref{sc3} are unconditional convergent with
\begin{equation}\label{convergence-result}
\|\Delta_h {\tilde u}^n\|+\|\nabla_h {\tilde v}^n\|\leq \left\{
\begin{array}{ll}\smallskip
C(\tau^{\min\{2-\beta,\gamma\sigma_1,\gamma\sigma_2\}}+h^2),\quad \mbox{if}\quad \theta=0;\\
C(\tau^{\min\{2,\gamma\sigma_1,\gamma\sigma_2\}}+h^2),\quad \mbox{if}\quad \theta=\frac{\beta}{2};
\end{array}
\quad \mbox{for}\quad1\leq n\leq N.
\right.
\end{equation}
\end{theorem}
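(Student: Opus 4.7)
The plan is to apply the $H^2$ energy method to the auxiliary error system \eqref{error_eq7}--\eqref{error_eq8} and close the estimate with the fractional Gr\"onwall inequality, Lemma \ref{gronwall-lemma}. The key structural observation is that, after eliminating $(\mathcal{D}_\tau^{\beta}\Delta_h\tilde u)^{n-\theta}$, the only coupling between the two discrete equations is the symmetric cross term $\langle\tilde w^{n-\theta},\Delta_h\tilde v^{n-\theta}\rangle$; by choosing the right pair of test functions one can cancel it while leaving two ``diagonal'' dissipation terms to which the off-set product rule \eqref{product-proper1} directly applies.

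Concretely, I would pair \eqref{error_eq7} with $\tilde v^{n-\theta}$ and \eqref{error_eq8} with $\tilde w^{n-\theta}$. Using self-adjointness of $\Delta_h$ under the homogeneous Dirichlet data satisfied by $\tilde v$, one has $\langle\Delta_h\tilde w^{n-\theta},\tilde v^{n-\theta}\rangle=\langle\tilde w^{n-\theta},\Delta_h\tilde v^{n-\theta}\rangle$ and $\langle(\mathcal{D}_\tau^{\beta}\Delta_h\tilde v)^{n-\theta},\tilde v^{n-\theta}\rangle=-\langle(\mathcal{D}_\tau^{\beta}\nabla_h\tilde v)^{n-\theta},\nabla_h\tilde v^{n-\theta}\rangle$. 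Substituting the first identity into the second test equation kills the cross term and produces
\[
\tfrac{1}{\nu^{2}}\langle(\mathcal{D}_\tau^{\beta}\tilde w)^{n-\theta},\tilde w^{n-\theta}\rangle + \langle(\mathcal{D}_\tau^{\beta}\nabla_h\tilde v)^{n-\theta},\nabla_h\tilde v^{n-\theta}\rangle = R^{n-\theta},
\]
where $R^{n-\theta}$ collects the paired truncation inner products and the nonlinear residual $\langle\Delta_h\mathcal{N}^{n-\theta},\tilde v^{n-\theta}\rangle$. Applying \eqref{product-proper1} on the left yields $\tfrac12\sum_{k=1}^n A_{n-k}^{(n)}\nabla_\tau[\nu^{-2}\|\tilde w^k\|^2+\|\nabla_h\tilde v^k\|^2]\le R^{n-\theta}$, exactly the template required by Lemma \ref{gronwall-lemma}, with $\|\tilde w^k\|$ and $\|\nabla_h\tilde v^k\|$ playing the roles of $v^k$ and $w^k$.

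To control the nonlinearity I would proceed by strong induction on $n$: the induction hypothesis provides \eqref{convergence-result} at all earlier steps, which combined with \textbf{MA} and the embedding $\|u\|_\infty\le C_\Omega\|\Delta_h u\|$ yields a uniform bound $\max\{\|u_h^{n-1}\|_\infty,\|\nabla_h u_h^{n-1}\|,\|\Delta_h u_h^{n-1}\|\}\le C_u^\star$. Lemma \ref{Delta-F}, applied with $\mathcal{F}=\partial_uf$ and $\mathcal{F}=\partial_u^2 f$, then delivers $\|\Delta_h\mathcal{N}^{n-\theta}\|\le C(\|\Delta_h\tilde u^{n-1}\|+\|\Delta_h\tilde u^n\|)$; using \eqref{error_eq2} in the form $\|\Delta_h\tilde u^k\|\le\nu^{-2}\|\tilde w^k\|+\|\mathcal{S}^k\|$, these contributions split into a piece that feeds the $\lambda_{n-k}$-convolution slot of the Gr\"onwall template and an $O(h^2)$ piece that joins the data term $g^n$. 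The remaining truncation norms $\|\Delta_h(\mathcal{T}_f,\mathcal{T}_u,\mathcal{T}_{v1},\mathcal{T}_{v2},\mathcal{T}_w)^{n-\theta}\|$ and $\|(\mathcal{D}_\tau^{\beta}\mathcal{S})^{n-\theta}\|$ are bounded by the Appendix under \eqref{regularity} and \textbf{MA}, and the complementary-kernel bound \eqref{P-proper} converts the resulting sum $\max_k\sum_{j=1}^k P_{k-j}^{(k)}g^j$ into the asserted orders $\tau^{\min\{2-\beta,\gamma\sigma_1,\gamma\sigma_2\}}$ for $\theta=0$ and $\tau^{\min\{2,\gamma\sigma_1,\gamma\sigma_2\}}$ for $\theta=\beta/2$. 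Lemma \ref{gronwall-lemma} then gives $\|\tilde w^n\|+\|\nabla_h\tilde v^n\|$ within the right-hand side of \eqref{convergence-result}, and a final use of \eqref{error_eq2} recovers $\|\Delta_h\tilde u^n\|$ up to an extra $\|\mathcal{S}^n\|=O(h^2)$ term, closing the induction.

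The main obstacle lies at the interaction of three ingredients: the non-uniform mesh, the induction needed to activate Lemma \ref{Delta-F}, and the Gr\"onwall step-size threshold $\tau\le(4\pi_A\Gamma(2-\beta)\Lambda)^{-1/\beta}$. The $H^2$-boundedness of $u_h^{n-1}$ required by Lemma \ref{Delta-F} is only available if convergence at the previous level has been established, so the induction must carry a norm strong enough to survive the embedding $\|u\|_\infty\lesssim\|\Delta_h u\|$, and the Mittag-Leffler factor $E_\beta(\cdot)$ in \eqref{gronwall2} must stay uniformly bounded on $[0,T]$ independently of $n$. A second delicate point is the composed term $(\mathcal{D}_\tau^{\beta}\mathcal{S})^{n-\theta}$: one must rewrite it as a discrete approximation of $(\Delta-\Delta_h)\mathcal{D}_t^{\beta}u$ and exploit the regularity of $\mathcal{D}_t^{\beta}u$ inherited from \eqref{regularity}, so that its contribution to $g^n$ remains $O(h^2)$ uniformly in $n$ rather than inheriting any singular $t^{\sigma-\beta}$-type weights near $t=0$.
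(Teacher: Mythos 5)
Your proposal is correct and follows essentially the same route as the paper's proof: the same pairing of \eqref{error_eq7} with $\tilde v^{n-\theta}$ and \eqref{error_eq8} with $\tilde w^{n-\theta}$ to cancel the cross term, the same use of \eqref{product-proper1} and Lemma \ref{gronwall-lemma}, the same induction carrying $\|\tilde w^k\|+\nu\|\nabla_h\tilde v^k\|$ so that Lemma \ref{Delta-F} can be activated via \eqref{error_eq2} and the embedding inequalities, and the same final recovery of $\|\Delta_h\tilde u^n\|$ from \eqref{error_eq2}. The only cosmetic difference is your treatment of $(\mathcal{D}_\tau^{\beta}\mathcal{S})^{n-\theta}$, which the paper handles more simply by exchanging the order of summation in \eqref{PDSn} so that the kernels telescope.
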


\begin{proof}
Taking inner product of equations \eqref{error_eq7} and \eqref{error_eq8} with ${\tilde v}_h^{n-\theta}$ and ${\tilde w}_h^{n-\theta}$ respectively, we have
\begin{align}\nonumber
\left\langle \Delta_h{\tilde w}^{n-\theta},{\tilde v}^{n-\theta} \right\rangle
=&\left\langle ({\cal D}_\tau^{\beta}\Delta_h{\tilde v})^{n-\theta},{\tilde v}^{n-\theta}\right\rangle\\\label{con-inner1}
&+\left\langle\Delta_h{\cal N}^{n-\theta} -\Delta_h({\cal T}_f)^{n-\theta}+\Delta_h({\cal T}_{v1})^{n-\theta}-\Delta_h({\cal T}_w)^{n-\theta},{\tilde v}^{n-\theta}\right\rangle
\end{align}
and
\begin{align}\label{con-inner2}
&\left\langle \frac1{\nu^2}({\cal D}_\tau^{\beta}{\tilde w})^{n-\theta},{\tilde w}^{n-\theta}\right\rangle=\left\langle\Delta_h{\tilde v}^{n-\theta},{\tilde w}^{n-\theta}\right\rangle+\left\langle-\Delta_h({\cal T}_{u})^{n-\theta}+\Delta_h({\cal T}_{v2})^{n-\theta}+({\cal D}_\tau^{\beta}{\cal S})^{n-\theta},{\tilde w}^{n-\theta}\right\rangle.
\end{align}
With the identity $\left\langle \Delta_h{\tilde w}^{n-\theta},{\tilde v}^{n-\theta} \right\rangle=\left\langle\Delta_h{\tilde v}^{n-\theta},{\tilde w}^{n-\theta}\right\rangle$ and the zero boundary conditions of ${\tilde v}_h^{n-\theta}$, it follows form \eqref{con-inner1}--\eqref{con-inner2} that
\begin{align*}%\nonumber
&\left\langle  \frac1{\nu^2}({\cal D}_\tau^{\beta}{\tilde w})^{n-\theta},{\tilde w}^{n-\theta}\right\rangle+\left\langle ({\cal D}_\tau^{\beta}\nabla_h{\tilde v})^{n-\theta},\nabla_h{\tilde v}^{n-\theta}\right\rangle\\%\nonumber
=&\left\langle\Delta_h{\cal N}^{n-\theta}  -\Delta_h({\cal T}_f)^{n-\theta}+\Delta_h({\cal T}_{v1})^{n-\theta}-\Delta_h({\cal T}_w)^{n-\theta},{\tilde v}^{n-\theta}\right\rangle\\
&+\left\langle-\Delta_h({\cal T}_{u})^{n-\theta}+\Delta_h({\cal T}_{v2})^{n-\theta}+({\cal D}_\tau^{\beta}{\cal S})^{n-\theta},{\tilde w}^{n-\theta}\right\rangle.
\end{align*}
Utilizing \eqref{product-proper1} and the Cauchy-Schwarz inequality, the above equation leads to
\begin{align}\nonumber
&\sum_{k=1}^nA_{n-k}^{(n)}\nabla_\tau(\|{\tilde w}^n\|^2+\nu^2\|\nabla_h{\tilde v}^n\|^2)\\\nonumber
\leq&2\nu^2\left(\|\Delta_h{\cal N}^{n-\theta}\|+\| \Delta_h({\cal T}_f)^{n-\theta}\|+\|\Delta_h({\cal T}_{v1})^{n-\theta}\|+\|\Delta_h({\cal T}_w)^{n-\theta}\|\right)\| {\tilde v}^{n-\theta}\|\\\nonumber
&+2\nu^2\left(\|\Delta_h({\cal T}_{u})^{n-\theta}\|+\|\Delta_h({\cal T}_{v2})^{n-\theta}\|+\|({\cal D}_\tau^{\beta}{\cal S})^{n-\theta}\|\right)\|{\tilde w}^{n-\theta}\|\\\label{con-est1}
\leq& 2\nu(1+\nu)(1+C_\Omega)\left(\|\Delta_h{\cal N}^{n-\theta}\|+ {\cal T}^{n-\theta}   \right)\left(\|{\tilde w}^{n-\theta}\| +\nu\| \nabla_h{\tilde v}^{n-\theta}\| \right),
\end{align}
where
$${\cal T}^{n-\theta}:=\| \Delta_h({\cal T}_f)^{n-\theta}\|+\|\Delta_h({\cal T}_{v1})^{n-\theta}\|+\|\Delta_h({\cal T}_w)^{n-\theta}\| +\|\Delta_h({\cal T}_{u})^{n-\theta}\|+\|\Delta_h({\cal T}_{v2})^{n-\theta}\|+\|({\cal D}_\tau^{\beta}{\cal S})^{n-\theta}\|.$$
From \eqref{PTf} and \eqref{PDSn}--\eqref{PTv2}, there exist positive constant $C_r$ such that
\begin{equation}\label{PTtheta}
\sum_{j=1}^n P_{n-j}^{(n)}{\cal T}^{j-\theta} \leq \left\{
\begin{array}{ll}\smallskip
C_r (\tau^{\min\{2-\beta,\gamma\sigma_1,\gamma\sigma_2\}}+h^2), \quad \mbox{for}\quad \theta=0;\\
C_r (\tau^{\min\{2,\gamma\sigma_1,\gamma\sigma_2\}}+h^2), \quad \mbox{for}\quad \theta=\frac{\beta}{2}.
\end{array}\right.
\end{equation}
 From the regularity assumptions in \eqref{regularity}, we introduce the following constant
\begin{align}\label{Un-assumption}
C_0=\max_{0\leq n\leq N}\{\|U^n \|_\infty, \|\nabla_h U^n \|, \|\Delta_h U^n \| \}.
\end{align}
The mathematical induction method will be applied to show that
\begin{align}\label{induction-result}
\|{\tilde w}^n\|+\nu\|\nabla_h{\tilde v}^n\| \leq {\cal E}_n {\tilde {\cal T}}^{n-\theta},\quad 1\leq n\leq N,
\end{align}
where ${\cal E}_n :=4E_\beta(2\max(1,\rho)\pi_A \Lambda t_n^\beta)$ with $\Lambda=2\nu(1+\nu)C_f(1+C_\Omega)$ and
\begin{equation}\nonumber
{\tilde {\cal T}}^{n-\theta}:=2\nu(1+\nu)(1+C_\Omega)\times\left\{
\begin{array}{ll}\smallskip
C_r(\tau^{\min\{2-\beta,\gamma\sigma_1,\gamma\sigma_2\}}+h^2) +\pi_A\Gamma(1-\beta)C_fC_ut_n^\beta h^2, \quad \mbox{for}\quad \theta=0;\\
C_r(\tau^{\min\{2,\gamma\sigma_1,\gamma\sigma_2\}}+h^2) +\pi_A\Gamma(1-\beta)C_fC_ut_n^\beta h^2, \quad \mbox{for}\quad \theta=\frac{\beta}{2},
\end{array}\right.
\end{equation}
in which $C_f=\max\{(1-\theta)C_1,C_2[1+((1-\theta)(C_2+1)+1)/\theta]\}$ with $C_1$ and $C_2$ being two proper positive constants which depend on $C_0$ and $C_\Omega$.

While $n=1$, it holds that ${\tilde u}_h^0=0$ and $u_h^0=U_h^0\leq C_0$. Suppose $f\in C^{(3,2,0)}({\mathbb R}\times\Omega\times[0,T])$, by  Lemma \ref{Delta-F} and \eqref{Sn}, there exists a positive constant $C_1$ such that
\begin{align}\label{N1theta}
\|\Delta_h{\cal N}^{1-\theta}\|=(1-\theta) \|\Delta_h f_u'(u_h^0,{\bf x},t_{1-\theta}){\tilde u}_h^1\|\leq (1-\theta)C_1\|\Delta_h{\tilde u}_h^1\|\leq (1-\theta)C_1(\|{\tilde w}^1\|+C_uh^2).
\end{align}
For simplicity, denote $\|{\tilde w}^{(n-\theta)}\|:=(1-\theta)\|{\tilde w}^n\|+\theta\|{\tilde w}^{n-1}\|$.
Similarly, we define $\|\nabla_h{\tilde v}^{(n-\theta)}\|$. The triangle inequality gives $\|{\tilde w}^{n-\theta}\|\leq \|{\tilde w}^{(n-\theta)}\|$ and $\|\nabla_h{\tilde v}^{n-\theta}\|\leq \|\nabla_h{\tilde v}^{(n-\theta)}\|$.

Then, it follows from \eqref{con-est1} and \eqref{N1theta} that
\begin{align*}
A_{0}^{(1)}\nabla_\tau(\|{\tilde w}^1\|^2+\nu^2\|\nabla_h{\tilde v}^1\|^2)
\leq 2\nu(1+\nu)C_1(1+C_\Omega) \left( \|{\tilde w}^{(1-\theta)}\|+\nu\|\nabla_h{\tilde v}^{(1-\theta)}\|\right)^2\\
+2\nu(1+\nu)(1+C_\Omega)\left({\cal T}^{1-\theta}+(1-\theta)C_1C_uh^2\right) \left( \|{\tilde w}^{(1-\theta)}\|+\nu\|\nabla_h{\tilde v}^{(1-\theta)}\|\right).
\end{align*}
Thus, applying Lemma \ref{gronwall-lemma} on the above inequality, and utilizing \eqref{PTtheta}, we get
\begin{align}\nonumber
\|{\tilde w}^1\|+\nu\|\nabla_h{\tilde v}^1\|
\leq {\cal E}_1
\left[ 2\nu(1+\nu)(1+C_\Omega)P_{0}^{(1)} \left({\cal T}^{1-\theta}+(1-\theta)C_1C_uh^2\right) \right]\leq {\cal E}_1 {\tilde {\cal T}}^{1-\theta},
\end{align}
which means that \eqref{induction-result} holds for $n=1$.

Assume that \eqref{induction-result} is valid for $1\leq k\leq n-1~(n\geq 2)$. The eq. \eqref{error_eq2} and discrete embedding inequalities imply that
\begin{align*}
\max\{ \| {\tilde u}^k\|_\infty,\|\nabla_h {\tilde u}^k\|, \|\Delta_h {\tilde u}^k\| \}\leq &\max\{1,C_\Omega\} \left(\frac1{\nu^2}\|{\tilde w}^k\| +\|{\cal S}^k\|\right)\\
\leq &\max\{1,C_\Omega\} \left(\frac1{\nu^2}{\cal E}_k {\tilde {\cal T}}^{k-\theta}+C_u h^2\right)\leq 1,
\end{align*}
for $ 1\leq k\leq n-1$ and small step sizes. So according to \eqref{Un-assumption}, the numerical solutions satisfy
$$\max\{ \| {u}^k\|_\infty,\|\nabla_h {u}^k\|, \|\Delta_h {u}^k\| \}\leq C_0+1.$$
Now for $k=n$, suppose $f\in C^{(4,2,0)}({\mathbb R}\times\Omega\times[0,T])$.
By Lemma \ref{Delta-F} there exists a positive constant $C_2$ such that
\begin{align}\nonumber
\| \Delta_h {\cal N}^{n-\theta} \| \leq& (1-\theta)\bigg(\| \Delta_h [f_u'(u^{n-1},{\bf x},t_{n-\theta})\nabla_\tau {\tilde u}^n]\| \\\nonumber
&+\int_0^1 \| \Delta_h[f_u''(sU^{n-1}+(1-s)u^{n-1},{\bf x},t_{n-\theta}) {\tilde u}^{n-1}\nabla_\tau U^n]\|\zd s \bigg)\\\nonumber
&+\int_0^1 \| \Delta_h[f_u'(sU^{n-1}+(1-s)u^{n-1},{\bf x},t_{n-\theta}) {\tilde u}^{n-1}]\|\zd s \\\nonumber
\leq& (1-\theta)C_2\left(\| \Delta_h (\nabla_\tau{\tilde u}^n)\|+C_2\| \Delta_h {\tilde u}^{n-1}\|\right)+C_2\|\Delta_h {\tilde u}^{n-1}\| \\\nonumber
\leq& C_f \left[ (1-\theta)\|\Delta_h{\tilde u}^{n}\|+\theta\|\Delta_h{\tilde u}^{n-1}\|\right]\\\label{D-Nn}
\leq& C_f \|{\tilde w}^{(n-\theta)}\| + C_f C_uh^2.
\end{align}
So \eqref{con-est1} and \eqref{D-Nn} lead to
\begin{align*}
\sum_{k=1}^nA_{n-k}^{(n)}\nabla_\tau(\|{\tilde w}^n\|^2+\nu^2\|\nabla_h{\tilde v}^n\|^2)
\leq 2\nu(1+\nu)C_f(1+C_\Omega) \left( \|{\tilde w}^{(n-\theta)}\|+\nu\|\nabla_h{\tilde v}^{(n-\theta)}\|\right)^2\\
+2\nu(1+\nu)(1+C_\Omega)\left({\cal T}^{n-\theta}+C_fC_uh^2\right) \left( \|{\tilde w}^{(n-\theta)}\|+\nu\|\nabla_h{\tilde v}^{(n-\theta)}\|\right).
\end{align*}
Applying Lemma \ref{gronwall-lemma}  and utilizing \eqref{PTtheta} again,  it yields
\begin{align*}
\|{\tilde w}^n\|+\nu\|\nabla_h{\tilde v}^n\|
\leq {\cal E}_n
\left[  2\nu(1+\nu)(1+C_\Omega)\max_{1\leq k\leq n}\sum_{j=1}^kP_{k-j}^{(k)} \left({\cal T}^{j-\theta}+C_fC_uh^2\right) \right]\leq {\cal E}_n {\tilde {\cal T}}^{n-\theta}.
\end{align*}
Therefore \eqref{induction-result} is verified.

Finally, the desired result \eqref{convergence-result} is reached by \eqref{error_eq2} and unifying the constants.
\end{proof}

\begin{remark}
A memory and computational storage saving technique (called SOE approximation)  investigated in \cite{FL1} (see also \cite[Theorem 2.5]{FL1} or \cite[Lemma 5.1]{Liao-AC2}) to compute the discrete Caputo derivative can be directly employed to the nonuniform L1 and Alikhanov formulas, the corresponding coefficients of fast L1 and fast Alikhanov formulas preserve the properties {\bf A1}--{\bf A2}  \cite{LiaoGronwall,Liao-AC2} which further ensure the theoretical analysis of the associated fast schemes. Therefore, in our later implementation of adaptive time stepping strategy and numerical tests, we will always utilize the fast L1 formula  \cite[Example 2]{LiaoGronwall} and the fast Alikhanov formula  \cite[eq. (5.3)]{Liao-AC2} while applying the proposed algorithms \eqref{sc1}--\eqref{sc3} with $\theta=0$ and $\theta=\beta/2$, respectively.
\end{remark}

\subsection{Adaptive time-stepping strategy}
The time mesh assumption in Theorems \ref{Convergence} permits us to establish adaptive time-stepping strategy based on the fast L1 and fast Alikhanov algorithms to reduce the computational costs while solving the semilinear diffusion-wave equation, especially when the solution of the governing problem may possess highly oscillatory feature in time. In the following, we refer to \cite{Gomez-JCP2011,Liao-AC2} for designing an adaptive time-stepping algorithm of the semilinear diffusion-wave equation \eqref{eq1}, the strategy is presented in Algorithm \ref{adaptive}.

\begin{algorithm}[htb]
%\setstretch{1.35}
\caption{Adaptive time-stepping strategy}\label{adaptive}
{\bf Given:}
$u^n$, $v^n$ and time step $\tau_{n+1}$
\begin{algorithmic}[1]
\STATE{Compute $u_1^{n+1}$ by the (fast) L1 scheme (\eqref{sc1}--\eqref{sc3} for $\theta=0$) with time step $\tau_{n+1}$;}
\STATE{Compute $u_2^{n+1}$ by the (fast) Alikhanov scheme (\eqref{sc1}--\eqref{sc3} for $\theta=\beta/2$) with time step $\tau_{n+1}$;}
\STATE{Calculate $e^{n+1}=\|u_2^{n+1}-u_1^{n+1}\|/\|u_2^{n+1}\|$;}
\IF{$e^{n+1}<tol$ or $\tau_{n+1}=\frac23\tau_{n}$}
\STATE{Update time step size $\tau_{n+2}\leftarrow \min\{\max\{\tau_{\min},\tau_{ada}\},\tau_{\max}\}$;}
\ELSE
 \STATE{Reformulate the time-step size $\tau_{n+1}\leftarrow \max\{\min\{\max\{\tau_{\min},\tau_{ada}\},\tau_{\max}\},\frac23\tau_{n+1}\}$;}
\STATE{\textbf{Goto} 1}
\ENDIF
\end{algorithmic}
\end{algorithm}

The adaptive time step size in Algorithm \ref{adaptive}  is updated by
$$\tau_{ada}(e,\tau)=S\left(\frac{tol}{e} \right)^{\frac12}\tau,$$
where $S$, $tol$ denote the safety coefficient and the tolerance, respectively.

\section{Numerical experiments}\label{Numerical}

Numerical examples are carried out in this section to show the accuracy and efficiency of proposed algorithms. The absolute tolerance error $\epsilon$ and the cut-off time $\Delta t$ of fast L1 formula  \cite[Example 2]{LiaoGronwall} and the fast Alikhanov formula  \cite[Lemma 5.1]{Liao-AC2} are set as $\epsilon=10^{-12}$ and $\Delta t = \tau_1$ in all of the following tests.

\begin{example}\label{ex1}
We first consider the problem \eqref{eq1} with $\Omega=(0,1)^2$, $T=1$, $\nu=1$ and
$$f(u,{\bf x},t)=-u^3+[\sin(\pi x)\sin(\pi y)(1+t+t^\alpha)]^3+\sin(\pi x)\sin(\pi y)\left[\Gamma(\alpha+1)+2\pi^2(1+t+t^\alpha)\right].$$
In this situation, the exact solution is $u=\sin(\pi x)\sin(\pi y)(1+t+t^\alpha)$.
\end{example}

One may notice that the regularity parameters in \eqref{regularity} are $\sigma_1=\alpha$ and $\sigma_2=\alpha/2$ for Example \ref{ex1}. Therefore, according to Theorem \ref{Convergence}, the optimal mesh parameter is $\gamma_{opt}=(4-\alpha)/\alpha$ for the nonuniform L1 scheme and takes the value $\gamma_{opt}=4/\alpha$ for the nonuniform Alikhanov scheme. They are all bounded for $\alpha\in(1,2)$. These bounded mesh parameters keep the robustness of the algorithms in practical implementation if the graded mesh $t_k=T(k/N)^\gamma$ is imposed to deal with the weak initial singularity. On the other hand, the optimal grading parameters of the nonuniform schemes in \cite{KoptevaMC2019,LiaoL1,LiaoSecondOrder,Liao-AC2,LiaoYanZhang2018,Stynes-SIAM2017}
will grow without bound while the fractional order becomes small as they are all possessing the form $\gamma_{opt}=r/\alpha$ where $\alpha\in(0,1)$ for the sub-diffusion problems and $r$ should be the optimal time rate, this generally lead to practical limitations. %It is likely to be an advantage of the nonuniform algorithms using the SFOR method while comparing to the possible nonuniform algorithms using the first-order reduction method \eqref{standard-OR1}--\eqref{standard-OR2} if the graded meshes are imposed, since the optimal grading parameter arising from the later method is probable to be the form $\gamma_{opt}=r/(\alpha-1)$.

 Since the spatial error ${\cal O}(h^2)$ is standard, we only display the temporal accuracy of the fast L1 and fast Alikhanov schemes. For Example \ref{ex1}, we fixed a fine spatial grid mesh with $M=1000$  such that the temporal errors dominate the spatial errors. In each tests, the time interval $[0,T]$ is divided into two parts $[0,T_0]$ and $(T_0,T]$ with total $N$ time nodes. A graded mesh with $t_k=T_0(k/N_0)^\gamma$ in the first interval  $[0,T_0]$ is utilized to resolve the weak initial singularity, where $T_0=\min\{1/\gamma,T \}$. For the second interval $(T_0,T]$, we use  random time-step sizes $\tau_{N_0+k}=(T-T_0)\epsilon_k/\sum_{k=1}^{N_1}\epsilon_k$ for $N_1=N-N_0$, where $\epsilon_k$ take values in $(0,1)$ randomly. For this example, we take $N_0=\lceil \frac{N}{T+1-\gamma^{-1}}\rceil$. The discrete $H^2$-norm errors $e_{H^2}(N)=\max_{1\leq n\leq N}\|U^n-u^n\|_{H^2}$ are  recorded in each run, and the temporal convergence order is given by
         $$\mbox{Order}=\log_2\left[\frac{e_{H^2}(N/2)}{e_{H^2}(N)}\right].$$

\begin{table}[htb!]
 \begin{center}
 \caption{Numerical accuracy  in temporal direction of  fast L1 scheme for Example \ref{ex1}, where $\alpha=1.1$.}\label{table1}
 \renewcommand{\arraystretch}{1}
 \def\temptablewidth{0.95\textwidth}
 {\rule{\temptablewidth}{1pt}}
 \begin{tabular*}{\temptablewidth}{@{\extracolsep{\fill}}ccccccc}
&\multicolumn{2}{c}{$\gamma=1$}  &\multicolumn{2}{c}{$\gamma_{opt}=(4-\alpha)/\alpha\approx2.64$}  &\multicolumn{2}{c}{$\gamma=\frac98\gamma_{opt}\approx2.97$}\\
  \cline{2-3}\cline{4-5}\cline{6-7}
 $N$ &  $e_{H^2}(N)$ &Order & $e_{H^2}(N)$& Order & $e_{H^2}(N)$& Order\\\hline
 $16$   & 4.2668e-02  & $\ast$  & 1.4285e-01  & $\ast$  &2.4493e-01  & $\ast$\\
 $32$  & 3.3723e-02  & 0.34   & 3.5519e-02  & 2.01 & 5.1210e-02  & 2.26 \\
 $64$  & 2.2386e-02  & 0.59  & 1.0731e-02  & 1.73 & 1.2611e-02  & 2.02 \\
$128$ & 1.3688e-02  & 0.71  & 2.4621e-03  & 2.12 & 3.8861e-03  & 1.70 \\\hline
Theoretical Order && 0.55 && 1.45 && 1.45
\end{tabular*}
{\rule{\temptablewidth}{1pt}}
\end{center}
\end{table}

\begin{table}[htb!]
 \begin{center}
 \caption{Numerical accuracy  in temporal direction of fast L1 scheme for Example \ref{ex1}, where $\alpha=1.5$.}\label{table2}
 \renewcommand{\arraystretch}{1}
 \def\temptablewidth{0.95\textwidth}
 {\rule{\temptablewidth}{1pt}}
 \begin{tabular*}{\temptablewidth}{@{\extracolsep{\fill}}ccccccc}
&\multicolumn{2}{c}{$\gamma=1$}  &\multicolumn{2}{c}{$\gamma_{opt}=(4-\alpha)/\alpha\approx1.67$}  &\multicolumn{2}{c}{$\gamma=\frac98\gamma_{opt}\approx1.88$}\\
  \cline{2-3}\cline{4-5}\cline{6-7}
 $N$ &  $e_{H^2}(N)$ &Order & $e_{H^2}(N)$& Order & $e_{H^2}(N)$& Order\\\hline
 $16$  & 3.4875e-02  & $\ast$ & 2.2507e-01  & $\ast$    &2.5657e-01  & $\ast$\\
 $32$  & 1.2196e-02  & 1.52  & 3.6991e-02  & 2.61    & 3.4006e-02  & 2.92 \\
 $64$  & 8.7566e-03  & 0.48  & 1.2921e-02  & 1.52   & 1.3962e-02  & 1.28 \\
$128$  & 5.5637e-03  & 0.65  & 4.2362e-03  & 1.61   & 3.8805e-03  & 1.85 \\\hline
Theoretical Order && 0.75 && 1.25 && 1.25
\end{tabular*}
{\rule{\temptablewidth}{1pt}}
\end{center}
\end{table}

\begin{table}[htb!]
 \begin{center}
 \caption{Numerical accuracy  in temporal direction of fast L1 scheme for Example \ref{ex1}, where $\alpha=1.9$.}\label{table3}
 \renewcommand{\arraystretch}{1}
 \def\temptablewidth{0.95\textwidth}
 {\rule{\temptablewidth}{1pt}}
 \begin{tabular*}{\temptablewidth}{@{\extracolsep{\fill}}ccccccc}
&\multicolumn{2}{c}{$\gamma=1$}  &\multicolumn{2}{c}{$\gamma_{opt}=(4-\alpha)/\alpha\approx1.11$}  &\multicolumn{2}{c}{$\gamma=\frac98\gamma_{opt}\approx1.24$}\\
  \cline{2-3}\cline{4-5}\cline{6-7}
 $N$ &  $e_{H^2}(N)$ &Order & $e_{H^2}(N)$& Order & $e_{H^2}(N)$& Order\\\hline
 $16$   & 7.4641e-02  & $\ast$  & 7.4027e-02  & $\ast$  &1.3225e-01  & $\ast$\\
 $32$  & 3.7415e-02  & 1.00   & 3.4234e-02  & 1.11 & 3.4250e-02  & 1.95 \\
 $64$  & 1.7990e-02  & 1.06  & 1.6886e-02  & 1.02 & 1.6737e-02  & 1.03 \\
$128$  & 8.4156e-03  & 1.10  & 8.0910e-03 & 1.06 & 8.0567e-03  & 1.05 \\\hline
Theoretical Order && 0.95 &&1.05 && 1.05
\end{tabular*}
{\rule{\temptablewidth}{1pt}}
\end{center}
\end{table}

\begin{table}[htb!]
 \begin{center}
 \caption{Numerical accuracy  in temporal direction of fast  Alikhanov scheme for Example \ref{ex1}, where $\alpha=1.2$.}\label{table4}
 \renewcommand{\arraystretch}{1}
 \def\temptablewidth{0.95\textwidth}
 {\rule{\temptablewidth}{1pt}}
 \begin{tabular*}{\temptablewidth}{@{\extracolsep{\fill}}ccccccc}
&\multicolumn{2}{c}{$\gamma=1$}  &\multicolumn{2}{c}{$\gamma_{opt}=4/\alpha\approx3.33$}  &\multicolumn{2}{c}{$\gamma=\frac98\gamma_{opt}\approx3.75 $}\\
  \cline{2-3}\cline{4-5}\cline{6-7}
 $N$ &  $e_{H^2}(N)$ &Order & $e_{H^2}(N)$& Order & $e_{H^2}(N)$& Order\\\hline
 $16$  & 5.2656e-02  & $\ast$   & 1.2494e-01  & $\ast$    & .5496e-01  &$\ast$  \\
 $32$  & 3.2671e-02  & 0.69  & 3.3236e-02  & 1.91    & 4.1575e-02  & 1.90 \\
$64$  & 2.0683e-02  & 0.66   & 8.5962e-03  & 1.95    & 1.0801e-02  & 1.94 \\
$128$  & 1.1645e-02  & 0.83  &2.1990e-03  & 1.97    & 2.8352e-03  & 1.93\\\hline
Theoretical Order && 0.60 &&2.00 && 2.00
\end{tabular*}
{\rule{\temptablewidth}{1pt}}
\end{center}
\end{table}

\begin{table}[htb!]
 \begin{center}
 \caption{Numerical accuracy  in temporal direction of fast Alikhanov scheme for Example \ref{ex1}, where $\alpha=1.5$.}\label{table5}
 \renewcommand{\arraystretch}{1}
 \def\temptablewidth{0.95\textwidth}
 {\rule{\temptablewidth}{1pt}}
 \begin{tabular*}{\temptablewidth}{@{\extracolsep{\fill}}ccccccc}
&\multicolumn{2}{c}{$\gamma=1$}  &\multicolumn{2}{c}{$\gamma_{opt}=4/\alpha\approx2.67$}  &\multicolumn{2}{c}{$\gamma=\frac98\gamma_{opt}=3 $}\\
  \cline{2-3}\cline{4-5}\cline{6-7}
 $N$ &  $e_{H^2}(N)$ &Order & $e_{H^2}(N)$& Order & $e_{H^2}(N)$& Order\\\hline
 $16$  & 3.0823e-02  & $\ast$   & 7.0440e-02  & $\ast$    & 8.7416e-02  &$\ast$  \\
 $32$  & 1.3857e-02  & 1.15  & 1.8560e-02  & 1.92    & 2.3212e-02  & 1.91 \\
$64$  & 6.2024e-03  & 1.16   & 4.7736e-03  & 1.96    & 5.9919e-03  & 1.95 \\
$128$  & 2.6236e-03  & 1.24  &1.2150e-03  & 1.97    & 1.5269e-03  & 1.97\\\hline
Theoretical Order && 0.75 &&2.00 && 2.00
\end{tabular*}
{\rule{\temptablewidth}{1pt}}
\end{center}
\end{table}

\begin{table}[htb!]
 \begin{center}
 \caption{Numerical accuracy  in temporal direction of fast Alikhanov scheme for Example \ref{ex1}, where $\alpha=1.8$.}\label{table6}
 \renewcommand{\arraystretch}{1}
 \def\temptablewidth{0.95\textwidth}
 {\rule{\temptablewidth}{1pt}}
 \begin{tabular*}{\temptablewidth}{@{\extracolsep{\fill}}ccccccc}
&\multicolumn{2}{c}{$\gamma=1$}  &\multicolumn{2}{c}{$\gamma_{opt}=4/\alpha\approx2.22$}  &\multicolumn{2}{c}{$\gamma=\frac98\gamma_{opt}=2.50$}\\
  \cline{2-3}\cline{4-5}\cline{6-7}
 $N$ &  $e_{H^2}(N)$ &Order & $e_{H^2}(N)$& Order & $e_{H^2}(N)$& Order\\\hline
 $16$  & 1.9521e-02  & $\ast$   & 3.5560e-02  & $\ast$  & 4.3938e-02  & $\ast$ \\
 $32$  & 6.7203e-03  & 1.54  & 9.3089e-03  & 1.93  & 1.1590e-02  & 1.92 \\
$64$  & 2.6309e-03  & 1.35  & 2.3828e-03  & 1.97  &2.9755e-03 & 1.96 \\
$128$ & 1.1487e-03  & 1.20   &6.0470e-04  & 1.98  & 7.5559e-04 & 1.98\\\hline
Theoretical Order && 0.90 && 2.00 && 2.00
\end{tabular*}
{\rule{\temptablewidth}{1pt}}
\end{center}
\end{table}

Tables \ref{table1}--\ref{table3} record the numerical results of  the proposed fast L1 scheme with different  grading parameters when solving the example for different $\alpha$. One can observe that the L1 scheme works accurately with the optimal temporal convergence of ${\cal O}(\tau^{\min\{2-\frac{\alpha}{2},\gamma\frac{\alpha}{2}\}})$.
Similar numerical tests of the fast Alikhanov scheme are carried out for the example, and the results are listed in Tables \ref{table4}--\ref{table6}. The temporal convergence of ${\cal O}(\tau^{\min\{2,\gamma\frac{\alpha}{2}\}})$ is well reflected and the optimal second-order convergence is apparent while $\gamma\geq\gamma_{opt}={4}/{\alpha}$.

\begin{example}\label{ex2}
Consider the semilinear problem \eqref{eq1} with $\Omega=(-1,1)^2$, $\nu=1$ and
$f(u,{\bf x},t)=-u^3$. The initial data are given as
$$\varphi({\bf x})=(x^2-1)(y^2-1)\{\exp\{-10((x+0.4)^2+y^2)]+\exp[-10((x-0.4)^2+y^2)]\},\quad
{\tilde \varphi}({\bf x})=0.$$
\end{example}

\begin{figure}[htb!]
\centering
\includegraphics[scale=0.22]{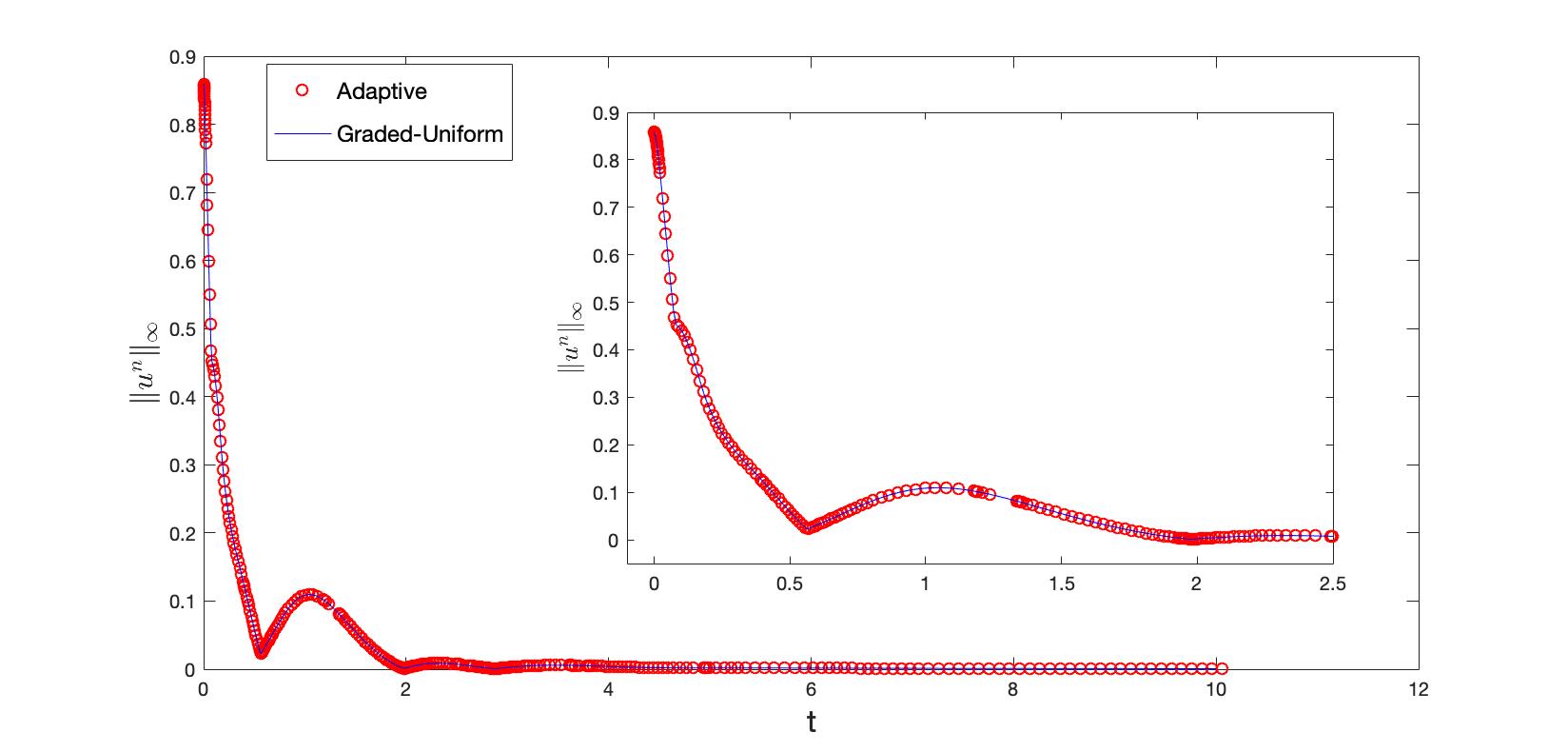}
\caption{The numerical solution in maximum-norm of the Algorithm \ref{adaptive} and the Graded-Uniform scheme for Example \ref{ex2} with $\alpha=1.5$.}
\label{fig1}
\end{figure}

\begin{figure}[htbp]
%\flushleft
\subfigure[t=0]{
    \begin{minipage}[t]{0.4\linewidth}
        \center
       \includegraphics[width=1.8in]{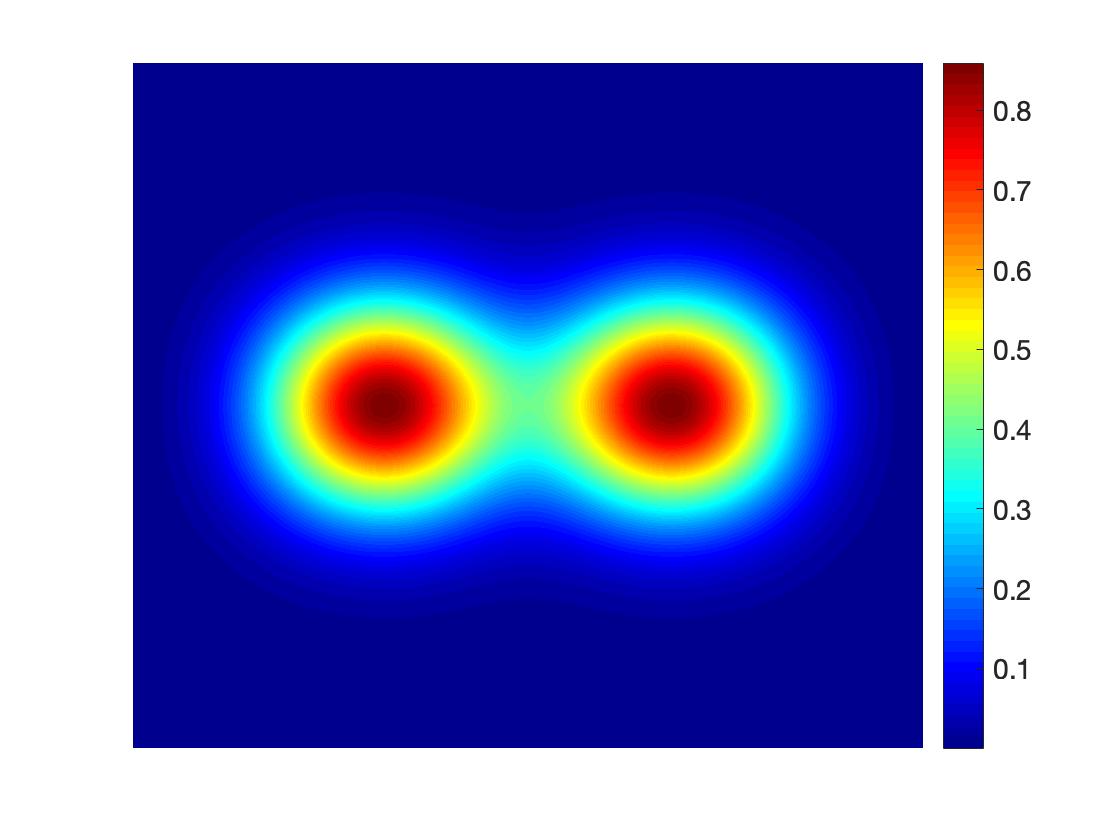}
    \end{minipage}
}
\subfigure[t=0.5]{
    \begin{minipage}[t]{0.4\linewidth}
       \center
       \includegraphics[width=1.8in]{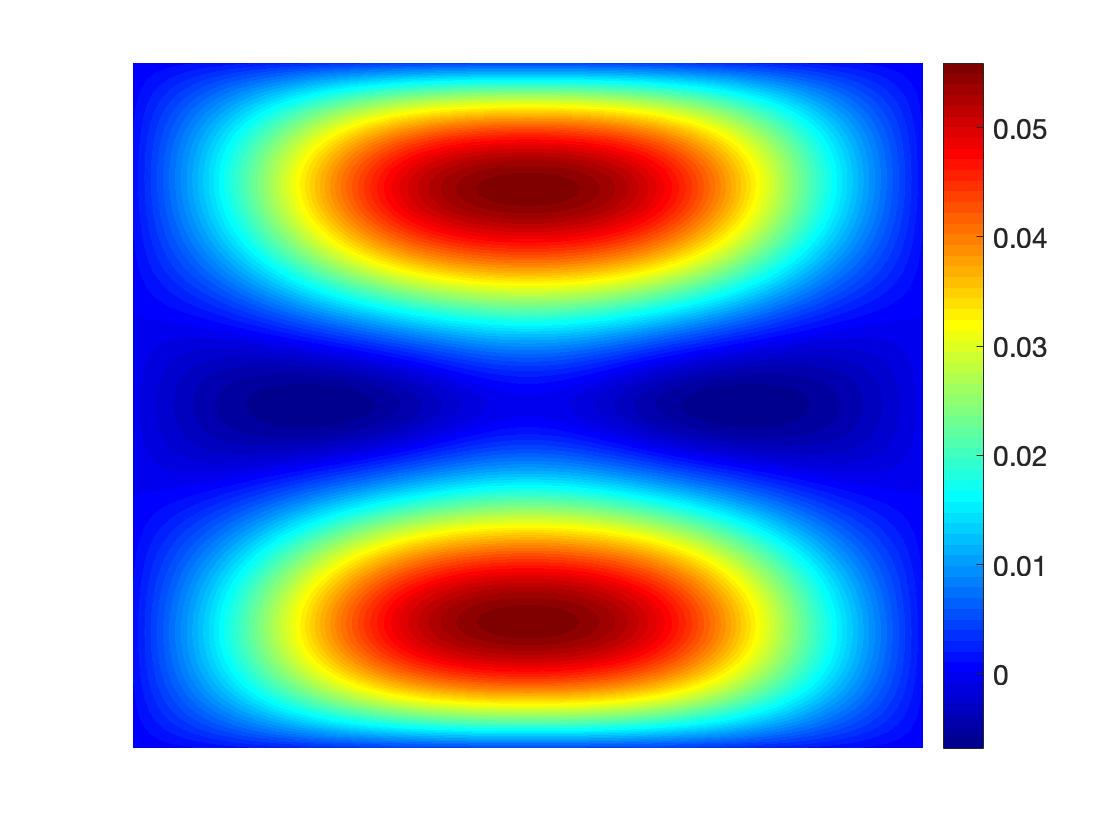}
    \end{minipage}
}
\subfigure[t=2]{
    \begin{minipage}[t]{0.4\linewidth}
        \center
       \includegraphics[width=1.8in]{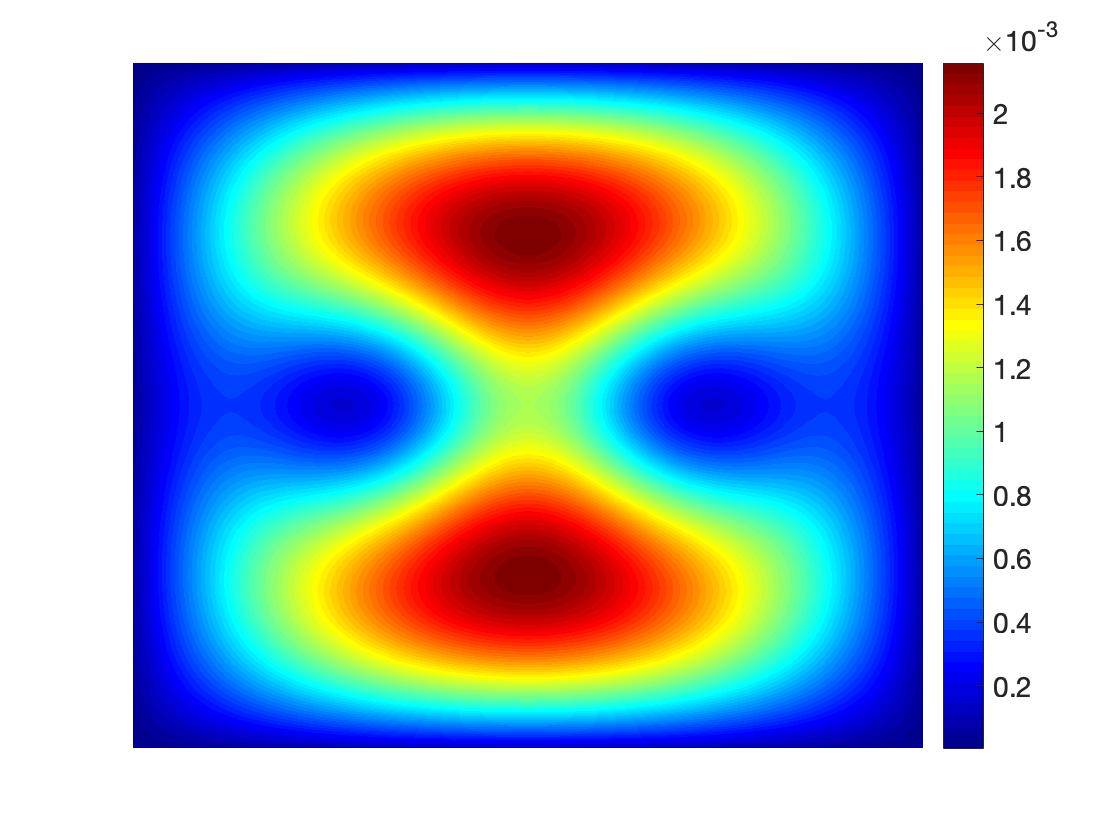}
    \end{minipage}
}
\subfigure[t=10]{
    \begin{minipage}[t]{0.4\linewidth}
        \center
       \includegraphics[width=1.8in]{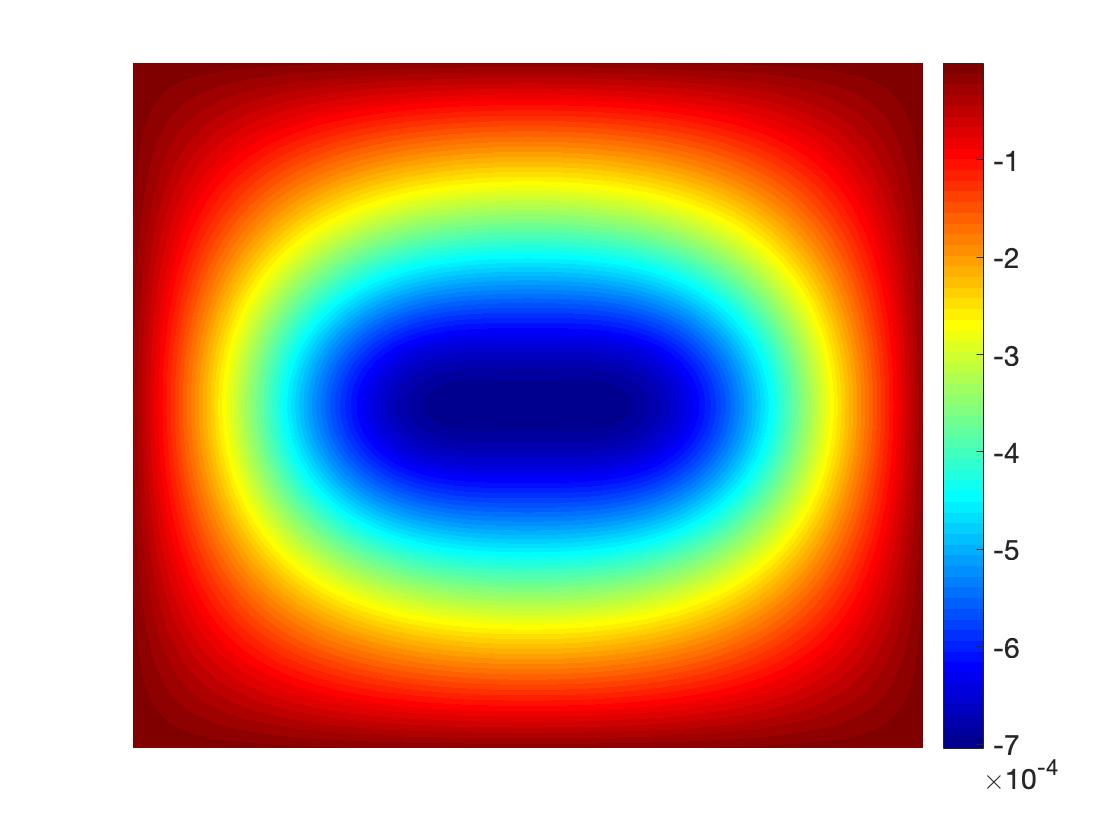}
    \end{minipage}
}
\centering
\caption{Contour plots of the solutions of Algorithm \ref{adaptive}  for Example \ref{ex2} at different time with $\alpha=1.5$.}
%\vspace{-0.2cm}
\label{fig2}
\end{figure}

\begin{figure}[htb!]
\centering
\includegraphics[scale=0.22]{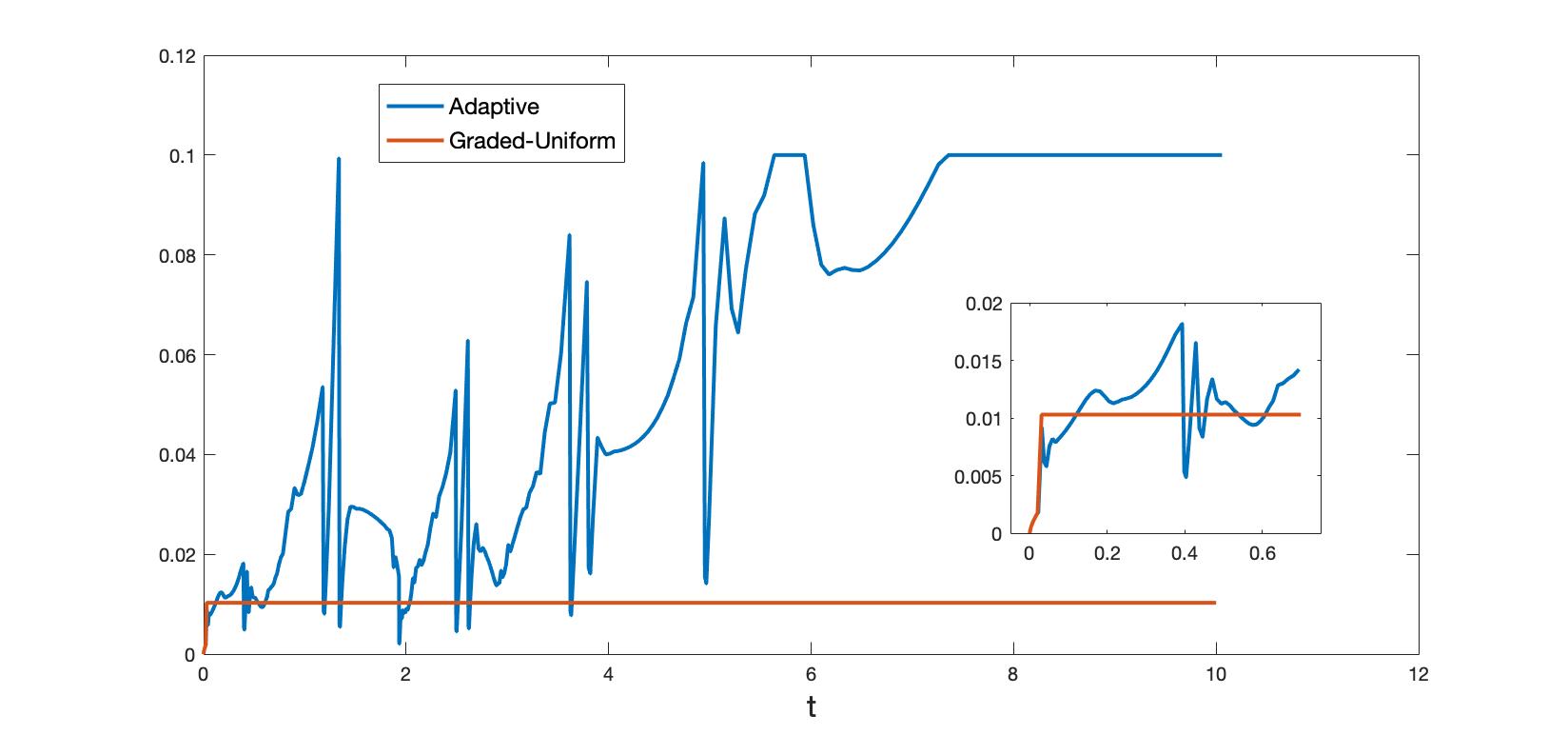}
\caption{The variation of time step sizes of the Algorithm \ref{adaptive} and the Graded-Uniform scheme for Example \ref{ex2} with $\alpha=1.5$.}
\label{fig3}
\end{figure}

For Example \ref{ex2}, we choose the spatial node $M=100$ and also divide the time interval $[0,T]$ into two parts $[0,T_0]$ and $(T_0,T]$ with $T_0=0.02$. The Alikhanov algorithm on graded mesh with $t_k=T_0(k/N_0)^\gamma$ ($\gamma=\alpha/4$) in the first interval  $[0,T_0]$ is utilized to resolve the possible weak initial singularity, where $N_0=30$. For the remaining interval $(T_0,T]$, we employ the proposed adaptive time stepping strategy (Algorithm \ref{adaptive}) to compute the numerical solution until $T=10$. The parameters of the adaptive algorithm for solving this example are
$$tol=10^{-3},~S=0.9,~\tau_{\min}=10^{-3},~\tau_{\max}=10^{-1},~\tau_{N_0+1}=\tau_{N_0}.$$

In order to show the efficiency of the adaptive algorithm, the fast linearized Alikhanov scheme is applied at the same time to find the solution in the interval $(T_0,T]$. Its temporal mesh is graded (with $\gamma=\alpha/4$) in $[0,T_0]$ and is uniform in $(T_0,T]$.
 In the following, we use `Graded-Uniform' to represent this scheme.

Figure \ref{fig1} displays the numerical solution in maximum-norm of the Algorithm \ref{adaptive} and the Graded-Uniform scheme for $\alpha=1.5$. It implies that the adaptive mesh suits well with a dense uniform mesh in $(T_0,T]$,  provided that the adaptive mesh requires 277 time nodes in the remain interval $(T_0,T]$ whereas the uniform mesh needs 970 time nodes. Figure \ref{fig2} gives the solution contour plots of the solutions by the adaptive strategy, which simply shows the wave interactions of the example at different time.  The variation of the temporal step sizes of the adaptive strategy with its comparison to those of the Graded-Uniform scheme are presented in Figure \ref{fig3}. The results indicate that the adaptive time-stepping strategy should be efficient and robust in the long time simulation of the semilinear diffusion-wave equations especially when the solution may exhibit high oscillations  in time.

\section{Concluding remarks}\label{Conclusion}

We proposed a novel order reduction method to equivalently rewrite the semilinear diffusion-wave equation into coupled equations, where the explicit time-fractional derivative orders are all $\alpha/2$. The L1 and Alikhanov schemes combining with linearized approximations have been constructed for the equivalent problem. By using $H^2$ energy method, unconditional convergences (Theorem \ref{Convergence}) were obtained for the two proposed algorithms under reasonable regularity assumptions and weak mesh restrictions. An adaptive time-stepping strategy was then designed for the semilinear problem to deal with possible temporal oscillations of the solution.  The theoretical results were well demonstrated by our numerical experiments.

We finally point out several relevant issues that deserve for further study: (i) deriving the regularity of the linear and semilinear diffusion-wave equations for the difference schemes;  (ii) studying the energy properties of the nonlinear diffusion-wave equations for both the continuous and discrete versions, noting that corresponding properties were investigated recently for the nonlinear sub-diffusion problems \cite{TangYuZhouSISC}; (iii) extending the proposed methods to some related problems, such as the multi-term time-fractional wave equation \cite{LyuLiangWangANM2020}.

\section{Acknowledgement}\label{Acknowledgement}
The authors are very grateful to Prof. Hong-lin Liao for his great help on the design of the SFOR method and valuable suggestions on other parts of the whole paper.

\section{Appendix: Truncation error analysis}\label{Appendix}

The truncation errors in \eqref{error_eq1}--\eqref{error_eq3} are defined as
\begin{align*}
&({\cal T}_f)_h^{n-\theta}:=f(U_h^{n-\theta},{\bf x}_h,t_{n-\theta})-\left[ f(U_h^{n-1},{\bf x}_h,t_{n-\theta})+(1-\theta)f'(U_h^{n-1},{\bf x}_h,t_{n-\theta})\nabla_\tau U_h^{n} \right],\\
%(1-\theta)\left(\nabla_\tau u({\bf x}_h,t_n)\right)^2\int_0^1f''\left( u({\bf x}_h,t_{n-1})+(1-\theta)s \nabla_\tau u({\bf x}_h,t_n)\right)(1-s)\zd s,\\
&({\cal T}_{v1})_h^{n-\theta}:={\cal D}_t^\beta {\bf v}({\bf x}_h,t_{n-\theta})-( {\cal D}_\tau^\beta {\bf v}_h)^{n-\theta},\\
&({\cal T}_w)_h^{n-\theta}:=w({\bf x}_h,t_{n-\theta})-w_h^{n-\theta},\\
&({\cal T}_{u})_h^{n-\theta}:={\cal D}_t^\beta {\bf u}({\bf x}_h,t_{n-\theta})-( {\cal D}_\tau^\beta {\bf u}_h)^{n-\theta},\\
&({\cal T}_{v2})_h^{n-\theta}:={\bf v}({\bf x}_h,t_{n-\theta})-{\bf v}_h^{n-\theta},\\
&{\cal S}_h^{n}:=\Delta u({\bf x}_h,t_n)-\Delta_h u_h^n.
\end{align*}
According to \cite[Lemma 3.8 and Theorem 3.9]{LiaoSecondOrder}, we have the follow lemma on estimating the time weighted approximation.
\begin{lemma}\label{theta-error}
Assume that $g\in C^2((0,T])$ and there exists a constant $C_g>0$ such that
$$|g''(t)|\leq C_g(1+t^{\sigma-2}), \quad 0<t\leq T,$$
where $\sigma\in(0,1)\cup (1,2)$ is a regularity parameter.
Denote the local truncation error of $g^{n-\vartheta}$ (here $\vartheta=\beta/2$) by
$${\tilde{\cal R}}^{n-\vartheta}=g(t_{n-\vartheta})-g^{n-\vartheta}, \quad 1\leq n\leq N.$$
If the mesh assumption {\bf MA} holds, then
\begin{align*}
\sum_{j=1}^n P_{n-j}^{(n)}|{\tilde{\cal R}}^{j-\vartheta}|\leq C_g\left(\tau_1^{\sigma+\beta}/\sigma+t_n^\beta\max_{2\leq k\leq n} t_{k-1}^{\sigma-2}\tau_k^2 \right)\leq C\tau^{\min\{\gamma\sigma,2\}}.
\end{align*}
\end{lemma}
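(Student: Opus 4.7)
\textbf{Proof plan for Lemma \ref{theta-error}.} The argument splits into four steps: a delicate estimate of the single boundary term $\tilde{\mathcal{R}}^{1-\vartheta}$, a routine bound on $\tilde{\mathcal{R}}^{j-\vartheta}$ for $j\ge 2$, summation against the complementary kernels $P_{n-j}^{(n)}$ using the properties in \eqref{P-proper}, and finally conversion to powers of $\tau$ via the mesh assumption \textbf{MA}.

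For $j\ge 2$ I would Taylor-expand $g$ around the off-set point $t_{j-\vartheta}=\vartheta t_{j-1}+(1-\vartheta)t_j$ with integral remainder. Because $t_{j-\vartheta}$ is the convex combination with the same weights used in $g^{j-\vartheta}$, the constant and first-order terms in $\vartheta g(t_{j-1})+(1-\vartheta)g(t_j)-g(t_{j-\vartheta})$ annihilate, leaving only integrals of $g''$ weighted by $|t_{j-1}-s|$ or $|t_j-s|$, both bounded by $\tau_j$. Since $[t_{j-1},t_j]\subset(0,T]$ is away from the singularity, one reads off
\[
|\tilde{\mathcal{R}}^{j-\vartheta}|\ \le\ C\,\tau_j^{2}\sup_{s\in[t_{j-1},t_j]}|g''(s)|\ \le\ C_g\,\tau_j^{2}\bigl(1+t_{j-1}^{\sigma-2}\bigr),\qquad j\ge 2.
\]

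The main obstacle is the $j=1$ case, because on $[0,t_1]$ the bound $|g''(s)|\le C_g(1+s^{\sigma-2})$ is not in $L^{1}$ when $\sigma\in(0,1)$, so one cannot directly Taylor-expand around $t_{0}=0$. The remedy is to expand around $t_{1-\vartheta}=(1-\vartheta)\tau_1>0$: using the integral form of Taylor's theorem for $g(0)$ and $g(t_1)$ centered at $t_{1-\vartheta}$, the combination $-\vartheta t_{1-\vartheta}+(1-\vartheta)\vartheta\tau_1$ multiplying $g'(t_{1-\vartheta})$ vanishes, so
\[
\tilde{\mathcal{R}}^{1-\vartheta}=-\vartheta\!\int_{0}^{t_{1-\vartheta}}\!s\,g''(s)\,\mathrm{d}s-(1-\vartheta)\!\int_{t_{1-\vartheta}}^{t_1}(t_1-s)\,g''(s)\,\mathrm{d}s.
\]
The weight $s$ in the first integral tames the singularity because $s\cdot s^{\sigma-2}=s^{\sigma-1}$ is integrable for every $\sigma>0$, producing the factor $t_{1-\vartheta}^{\sigma}/\sigma$ that explains the $1/\sigma$ in the statement; the second integral avoids the singularity altogether. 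Elementary computation then yields $|\tilde{\mathcal{R}}^{1-\vartheta}|\le C_g\,\tau_1^{\sigma}/\sigma$.

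To assemble the sum, apply \eqref{P-proper}: $P_{n-1}^{(n)}\le \pi_A\Gamma(2-\beta)\tau_1^{\beta}$ and (combining $\sum_{j=1}^{n}P_{n-j}^{(n)}\omega_{1-\beta}(t_j)\le\pi_A$ with $\omega_{1-\beta}(t_j)\ge\omega_{1-\beta}(t_n)$) $\sum_{j=1}^{n}P_{n-j}^{(n)}\le \pi_A\Gamma(1-\beta)t_n^{\beta}$. Together with the two pointwise error bounds above, this delivers the first inequality
\[
\sum_{j=1}^{n}P_{n-j}^{(n)}|\tilde{\mathcal{R}}^{j-\vartheta}|\ \le\ C_g\Bigl(\tau_1^{\sigma+\beta}/\sigma+t_n^{\beta}\max_{2\le k\le n}t_{k-1}^{\sigma-2}\tau_k^{2}\Bigr).
\]
For the second inequality I invoke \textbf{MA}: iterating $\tau_1\le C_\gamma\tau\,\tau_1^{1-1/\gamma}$ gives $\tau_1\le C\tau^{\gamma}$, hence $\tau_1^{\sigma+\beta}\le C\tau^{\gamma(\sigma+\beta)}\le C\tau^{\min\{\gamma\sigma,2\}}$ (using $\gamma\beta\ge 0$ and $\gamma\sigma\le 2$ in the first range, $\gamma(\sigma+\beta)\ge \gamma\sigma\ge 2$ in the second); similarly, $\tau_k\le C\tau\,t_k^{1-1/\gamma}\le C\tau\,t_{k-1}^{1-1/\gamma}$ for $k\ge 2$ yields $t_{k-1}^{\sigma-2}\tau_k^{2}\le C\tau^{2}t_{k-1}^{\sigma-2/\gamma}$, which is dominated by a constant times $\tau^{\min\{2,\gamma\sigma\}}$ by comparing at the endpoint $t_{k-1}=t_1\sim\tau^{\gamma}$ when $\gamma\sigma<2$ and at $t_{k-1}\le T$ otherwise. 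Combining the two contributions finishes the proof.
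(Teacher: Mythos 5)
Your Steps 1--3 are sound and essentially reconstruct the argument that the paper itself does not spell out (it simply quotes this lemma from [LiaoSecondOrder, Lemma 3.8 and Theorem 3.9]): the cancellation of the first-order Taylor terms at the off-set point $t_{j-\vartheta}$, the special treatment of $j=1$ with expansion about $t_{1-\vartheta}>0$ so that the weight $s$ renders $s\,g''(s)$ integrable, and the kernel bounds $P_{n-1}^{(n)}\leq \pi_A\Gamma(2-\beta)\tau_1^{\beta}$ and $\sum_{j=1}^{n}P_{n-j}^{(n)}\leq \pi_A\Gamma(1-\beta)t_n^{\beta}$ are all correct and give the first displayed inequality.

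The gap is in Step 4, in the regime $\gamma\sigma<2$. You bound $t_{k-1}^{\sigma-2}\tau_k^{2}\leq C\tau^{2}t_{k-1}^{\sigma-2/\gamma}$ and then argue "by comparing at the endpoint $t_{k-1}=t_1\sim\tau^{\gamma}$". But assumption \textbf{MA} only yields the upper bound $t_1=\tau_1\leq C\tau^{\gamma}$, not a matching lower bound; since the exponent $\sigma-2/\gamma$ is negative when $\gamma\sigma<2$, a mesh with $t_1\ll\tau^{\gamma}$ (which is perfectly admissible under \textbf{MA}) makes $\tau^{2}t_1^{\sigma-2/\gamma}$ arbitrarily larger than $\tau^{\gamma\sigma}$, so the endpoint comparison does not close the estimate. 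The standard repair is to interpolate between the two bounds $\tau_k\leq C_\gamma\tau\,t_k^{1-1/\gamma}$ and $\tau_k\leq t_k$: write
\begin{align*}
\tau_k^{2}=\tau_k^{\gamma\sigma}\,\tau_k^{2-\gamma\sigma}\leq \bigl(C_\gamma\tau\,t_k^{1-1/\gamma}\bigr)^{\gamma\sigma}\,t_k^{2-\gamma\sigma}=C\,\tau^{\gamma\sigma}\,t_k^{2-\sigma},
\end{align*}
so that $t_{k-1}^{\sigma-2}\tau_k^{2}\leq C\,\tau^{\gamma\sigma}\,(t_k/t_{k-1})^{2-\sigma}\leq C\,C_\gamma^{2-\sigma}\,\tau^{\gamma\sigma}$, using $t_k\leq C_\gamma t_{k-1}$ from \textbf{MA}; the case $\gamma\sigma\geq 2$ follows from $\tau_k\leq C_\gamma\tau\,t_k^{1-1/\gamma}$ with exponent $2$ as you indicate. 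With this replacement (and noting that the isolated "$1$" in $1+t_{k-1}^{\sigma-2}$ is absorbed via $1\leq T^{2-\sigma}t_{k-1}^{\sigma-2}$), your proof is complete.
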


The following lemma is provided to analyze $({\cal T}_f)_h^{n-\theta}$, which  is analogous  to Lemma 3.4 in \cite{LiaoYanZhang2018}.
\begin{lemma}\label{Rf-error}
Assume that $\eta\in C([0,T])\cap C^2((0,T])$ and there exists a constant $C_u>0$ such that
$$|\eta^{(k)}(t)|\leq C_u(1+t^{\sigma-k}), \quad 0<t\leq T,\quad k=1,2,$$
where $\sigma\in(0,1)\cup (1,2)$ is a regularity parameter. Assume further that  the nonlinear function $f(u,x,t)\in C^4({\mathbb R})$ with respect to $u$.
Denote $\eta^n=\eta(t_n)$ and the local truncation error
\begin{align*}
{\cal R}_f^{n-\theta}:=f(\eta(t_{n-\theta}),x,t)-\left[f(\eta^{n-1},x,t)+(1-\theta)f'_u(\eta^{n-1},x,t)\nabla_\tau \eta^n\right].
\end{align*}
If the assumption {\bf MA} holds, then
\begin{equation}\nonumber
\sum_{j=1}^n P_{n-j}^{(n)}|{\cal R}_f^{j-\theta}|\leq \left\{
\begin{array}{ll}
C\tau^{\min\{2\gamma\sigma,2\}},\quad \theta=0;\\
C\tau^{\min\{\gamma\sigma,2\}},\quad \theta=\frac{\beta}{2};
\end{array}
 \quad 1\leq n\leq N.
 \right.
\end{equation}
\end{lemma}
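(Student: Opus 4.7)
The strategy is to Taylor expand $f(\eta(t_{n-\theta}),x,t)$ in its first argument about $\eta^{n-1}$ to second order (valid since $f\in C^4$ in $u$) and rearrange using the identity $\eta^{n-\theta}-\eta^{n-1}=(1-\theta)\nabla_\tau\eta^n$ to get
$${\cal R}_f^{n-\theta}
= f'_u(\eta^{n-1},x,t)\bigl[\eta(t_{n-\theta})-\eta^{n-\theta}\bigr]
+\tfrac12\,f''_u(\xi^{n-\theta},x,t)\bigl[\eta(t_{n-\theta})-\eta^{n-1}\bigr]^2,$$
for some $\xi^{n-\theta}$ between $\eta^{n-1}$ and $\eta(t_{n-\theta})$. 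This decouples a \emph{linear} time-interpolation error from a \emph{quadratic} Taylor remainder. Because $\eta$ is bounded and $f\in C^4$ in $u$, both $f'_u$ and $f''_u$ are uniformly bounded at the arguments appearing here, so they factor out of any kernel-weighted sum.

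First I would dispatch the linear piece. For $\theta=0$ the bracket $\eta(t_n)-\eta^n$ vanishes identically, so there is nothing to estimate. For $\theta=\beta/2$, the bracket is precisely the interpolation error ${\tilde{\cal R}}^{j-\theta}$ of Lemma~\ref{theta-error} applied to $g=\eta$; using $|\eta''(t)|\le C_u(1+t^{\sigma-2})$, that lemma directly yields $\sum_{j=1}^n P_{n-j}^{(n)}|{\tilde{\cal R}}^{j-\theta}|\le C\tau^{\min\{\gamma\sigma,2\}}$, which is the dominant contribution in the Alikhanov case.

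The quadratic piece is the main obstacle. Bounding $|\eta(t_{j-\theta})-\eta^{j-1}|\le\int_{t_{j-1}}^{t_j}|\eta'(s)|\,ds$ together with $|\eta'(s)|\le C_u(1+s^{\sigma-1})$ and the ratio $t_{j-1}\ge C^{-1}t_j$ from MA gives $(\nabla_\tau\eta^1)^2\le Ct_1^{2\sigma}$ and, for $j\ge 2$, $|\eta(t_{j-\theta})-\eta^{j-1}|^2\le C\tau_j^2(1+t_j^{2\sigma-2})$. To bound $\sum_{j=1}^n P_{n-j}^{(n)}$ against these pointwise estimates I would extract a factor $\omega_{1-\beta}(t_j)$ from the kernel via the normalization $\sum_j P_{n-j}^{(n)}\omega_{1-\beta}(t_j)\le\pi_A$ in \eqref{P-proper}, substitute $\tau_j\le C\tau\min\{1,t_j^{1-1/\gamma}\}$ from MA, and reduce the $j\ge 2$ sum to the amplitude maximum $\max_j t_j^{\beta+2\sigma-2/\gamma}$ times $\tau^2$. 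For $\gamma\sigma\ge 1$ this exponent is $\ge\beta>0$, the maximum sits at $t_j=T$, and the bound is $C\tau^2$. For $\gamma\sigma<1$ the exponent may be negative; the maximum is then attained at $t_2$, and one uses $t_2\ge\tau_1$ together with the MA consequence $\tau_1\le C\tau^\gamma$ to conclude $C\tau^{2\gamma\sigma+\gamma\beta}\le C\tau^{2\gamma\sigma}$. The isolated $j=1$ term, bounded by $P_0^{(1)}t_1^{2\sigma}\le C\tau^{\gamma(\beta+2\sigma)}$, is subordinate in either regime. Assembling these pieces gives a total quadratic contribution $\le C\tau^{\min\{2\gamma\sigma,2\}}$, which is subordinate to the linear estimate when $\theta=\beta/2$ (since $2\gamma\sigma>\gamma\sigma$) and furnishes the full rate when $\theta=0$. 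The delicate accounting is in the singular regime $\gamma\sigma<1$: the factor $t_j^{2\sigma-2}$ must be exactly compensated by the $t_j^\beta$ extracted from the kernel normalization together with the $t_j^{2-2/\gamma}$ arising from $\tau_j^2$, and one must invoke the lower bound $t_2\ge\tau_1$ to avoid a blow-up at the smallest mesh node.
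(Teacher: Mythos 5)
Your decomposition is exactly the paper's: Taylor expansion of $f$ about $\eta^{n-1}$ producing a linear term $f'_u(\eta^{n-1})\bigl[\eta(t_{n-\theta})-\eta^{n-\theta}\bigr]$ plus a quadratic remainder in $\eta(t_{n-\theta})-\eta^{n-1}=(1-\theta)\nabla_\tau\eta^n+{\cal R}_\eta^{n-\theta}$, and your treatment of the linear term (it vanishes for $\theta=0$; for $\theta=\beta/2$ it is handled by Lemma \ref{theta-error}) is also what the paper does. The paper then delegates the quadratic term to the cited proof of Lemma 3.4 in \cite{LiaoYanZhang2018}; you attempt to carry that computation out explicitly, and this is where there is a genuine error.

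In the regime $\gamma\sigma<1$ you reduce the quadratic contribution to $C\tau^2\max_j t_j^{\beta+2\sigma-2/\gamma}$ and then, with the exponent $e=\beta+2\sigma-2/\gamma<0$, try to close via $t_2\ge\tau_1$ and the \textbf{MA} consequence $\tau_1\le C\tau^{\gamma}$. Since $e<0$, the bound $\tau_1\le C\tau^{\gamma}$ gives $\tau_1^{e}\ge C^{e}\tau^{\gamma e}$ — a \emph{lower} bound — so the step $\tau^2 t_2^{e}\le C\tau^{2+\gamma e}=C\tau^{2\gamma\sigma+\gamma\beta}$ does not follow; it would require $\tau_1\ge c\tau^{\gamma}$, which holds for the graded mesh $t_k=T(k/N)^{\gamma}$ but is not implied by \textbf{MA} (e.g.\ a mesh graded with parameter $\gamma'>\gamma$ satisfies \textbf{MA} with parameter $\gamma$ yet has $\tau_1\sim\tau^{\gamma'}\ll\tau^{\gamma}$, and on it your intermediate quantity $\tau^2 t_2^{e}$ actually blows up). The loss occurs one step earlier, when you substitute $\tau_j\le C\tau t_j^{1-1/\gamma}$ into \emph{both} factors of $\tau_j^2$. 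The standard repair is to split the power: with $a=\max\{0,\,2-\gamma(\beta+2\sigma)\}\in[0,2]$ write $\tau_j^2=\tau_j^{2-a}\tau_j^{a}\le (C\tau)^{2-a}t_j^{(2-a)(1-1/\gamma)}\,t_j^{a}$ (using $\tau_j\le t_j$ for the second factor), so that $t_j^{\beta+2\sigma-2}\tau_j^2\le C\tau^{2-a}t_j^{\beta+2\sigma-(2-a)/\gamma}$ has a nonnegative exponent of $t_j$ and is bounded by $C\tau^{\min\{2,\gamma\beta+2\gamma\sigma\}}\le C\tau^{\min\{2,2\gamma\sigma\}}$ with no lower bound on any mesh node. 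With that replacement your argument closes and yields the stated rates.
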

\begin{proof}
Denote ${\cal R}_\eta^{n-\theta}:=\eta(t_{n-\theta})-\eta^{n-\theta}$. We have ${\cal R}_\eta^{n-\theta}=0$ while $\theta=0$.
By the Taylor expansion, we have
\begin{align*}
{\cal R}_f^{n-\theta}
=&f_u'(\eta^{n-1}){\cal R}_\eta^{n-\theta}\\
&+\left((1-\theta)\nabla_\tau \eta^n+{\cal R}_\eta^{n-\theta}\right)^2\int_0^1f''_u\left( \eta^{n-1}+s (\eta(t_{n-\theta})-\eta^{n-1}),x,t\right)(1-s)\zd s.
\end{align*}
Following the proof of \cite[Lemma 3.4]{LiaoYanZhang2018} and using Lemma \ref{theta-error}, the desired result holds immediately.
\end{proof}
For ${\bf x}\in \Omega$, let $\xi^n({\bf x})$  be a
spatially
continues function and denote $\xi^n_h:=\xi^n({\bf x}_h)$. One may apply the Taylor expansion to get
\begin{align*}
\Delta_h \xi^n_h=&\int_0^1\left[\partial_{xx}\xi^n(x_i-sh_x,y_j)+\partial_{xx}\xi^n(x_i+sh_x,y_j) \right](1-s)\zd s\\
&+\int_0^1\left[\partial_{yy}\xi^n(x_i,y_j-sh_y)+\partial_{yy}\xi^n(x_i,y_j+sh_y) \right](1-s)\zd s, \quad 1\leq n\leq N.
\end{align*}
Then we  define a function ${\cal T}_f^{n}({\bf x})$ by $({\cal T}_f)_h^{n}={\cal T}_f^{n}({\bf x}_h)$. If the assumptions in \eqref{regularity} and {\bf MA} are satisfied, by Lemma \ref{Rf-error} and  the differential formula of composite function, we can obtain
\begin{equation}\label{PTf}
\sum_{j=1}^n P_{n-j}^{(n)}\|\Delta_h({\cal T}_f)^{n-\theta}\|\leq \left\{
\begin{array}{ll}
C\tau^{\min\{2\gamma\sigma_1,2\}},\quad \theta=0;\\
C\tau^{\min\{\gamma\sigma_1,2\}},\quad \theta=\frac{\beta}{2};
\end{array}
 \quad 1\leq n\leq N.
 \right.
 \end{equation}
For the spatial error, based on the regularity condition, it is easy to know that
\begin{align}\label{Sn}
\|{\cal S}^{n}\|\leq C_uh^2, \quad 1\leq n\leq N.
\end{align}
Then
\begin{align}\label{PDSn}
\sum_{j=1}^n P_{n-j}^{(n)} \|({\cal D}_\tau^{\beta}{\cal S})^{j}\|\leq
\sum_{j=1}^n P_{n-j}^{(n)}\sum_{k=1}^jA_{j-k}^{(j)}\|\nabla_\tau{\cal S}^{k}\|
=\sum_{k=1}^n\|\nabla_\tau{\cal S}^{k}\|\leq C_u(1+t_{n}^{\sigma_1-1})h^2.
\end{align}
We now consider the temporal truncation errors $({\cal T}_{v1})_h^{n-\theta}$, $({\cal T}_{w})_h^{n-\theta}$, $({\cal T}_{u})_h^{n-\theta}$ and $({\cal T}_{v2})_h^{n-\theta}$ in two situations: $\theta=0$ and $\theta=\beta/2$.

For a function $g(t)$, define the global error
$${\cal R}^{n-\theta}:=({\cal D}_t^\beta g)(t_{n-\theta})-({\cal D}_{\tau}^\beta g)^{n-\theta},\quad 1\leq n\leq N.$$
$\bullet$ {\bf For L1 approximation ($\theta=0$):} We have $({\cal T}_w)_h^{n}=({\cal T}_{v2})_h^{n}=0$ in this situation.

According to \cite[Lemma 3.3]{LiaoL1} and \cite[Lemma 3.3]{LiaoYanZhang2018}, the global  consistency error of the L1 approximation can be presented in the following lemma.
\begin{lemma}\label{L1-error}
Assume that $g\in C^2((0,T])$ and there exists a constant $C_g>0$ such that
$$|g''(t)|\leq C_g(1+t^{\sigma-2}), \quad 0<t\leq T,$$
where $\sigma\in(0,1)\cup (1,2)$ is a regularity parameter. If the assumption {\bf MA} holds, it follows that
\begin{align*}
\sum_{j=1}^n P_{n-j}^{(n)}|{\cal R}^j|\leq C_g\left(\tau_1^\sigma/\sigma+\frac1{1-\beta}\max_{2\leq k\leq n}(t_k-t_1)^\beta t_{k-1}^{\sigma-2}\tau_k^{2-\beta} \right)\leq C\tau^{\min\{2-\beta,\gamma\sigma\}}.
\end{align*}
\end{lemma}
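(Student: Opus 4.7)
My plan is to prove this by decomposing the local truncation error $\mathcal{R}^j$ over subintervals, bounding each piece using the L1 interpolation error together with the regularity of $g$, and finally summing against the complementary kernels $P_{n-j}^{(n)}$ with the help of the properties in \eqref{P-proper} and the mesh assumption \textbf{MA}. Concretely, using the definition \eqref{L1-discretization} of the L1 formula, I would write
\begin{align*}
\mathcal{R}^j=\sum_{k=1}^j I_k^j,\qquad I_k^j:=\int_{t_{k-1}}^{t_k}\omega_{1-\beta}(t_j-s)\bigl[g'(s)-(\Pi_{1,k}g)'(s)\bigr]\zd s.
\end{align*}
The singular ($k=1$) and regular ($k\geq 2$) subintervals will be treated separately.

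For $k=1$, the interpolation error $(\Pi_{1,1}g)'=\tau_1^{-1}\int_0^{t_1}g'(y)\zd y$ cannot be handled by the standard Taylor remainder because $g''$ may blow up at $0$. Instead, I would estimate $|g'(s)-\tau_1^{-1}\int_0^{t_1}g'(y)\zd y|$ directly using the bound $|g''(z)|\leq C_g(1+z^{\sigma-2})$, integrating to obtain a pointwise estimate on $g'$; inserting this and integrating against $\omega_{1-\beta}(t_j-s)$ yields a contribution of order $C_g\tau_1^\sigma/\sigma$ (independent of $j$ after using $\int_0^{t_1}\omega_{1-\beta}(t_j-s)\zd s\leq\omega_{2-\beta}(t_j)$-type bounds). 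For $k\geq 2$, on $[t_{k-1},t_k]$ we have $|g''(s)|\leq C_g t_{k-1}^{\sigma-2}$, and using the linear interpolation estimate $|g(s)-\Pi_{1,k}g(s)|\lesssim \tau_k^2 t_{k-1}^{\sigma-2}$ followed by integration by parts in $I_k^j$ (the boundary terms vanish since $\Pi_{1,k}g$ interpolates $g$ at the endpoints), I obtain
\begin{align*}
|I_k^j|\leq C\,C_g\,t_{k-1}^{\sigma-2}\tau_k^2\int_{t_{k-1}}^{t_k}\omega_{-\beta}(t_j-s)\zd s,
\end{align*}
understood in the usual regularized sense.

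Summing over $k$ and applying the complementary kernels $P_{n-j}^{(n)}$, I then exchange the double sum and use the second inequality in \eqref{P-proper}, namely $\sum_j P_{n-j}^{(n)}\omega_{1-\beta}(t_j)\leq\pi_A$, to absorb the discrete convolution from the $k\geq 2$ contributions. The $k=1$ part, being uniformly small in $j$, is handled using $\sum_j P_{n-j}^{(n)}\omega_{1-\beta}(t_j-t_1)\leq \pi_A$ (after a translation argument), and produces the pure $\tau_1^\sigma/\sigma$ term. The remaining $k\geq 2$ contribution produces the $\max_{k}(t_k-t_1)^\beta t_{k-1}^{\sigma-2}\tau_k^{2-\beta}$ factor via the identity $\int_{t_{k-1}}^{t_k}\omega_{-\beta}(t_j-s)\zd s$ telescoping against $P_{n-j}^{(n)}$. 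Finally, to obtain the clean bound $C\tau^{\min\{2-\beta,\gamma\sigma\}}$, I apply \textbf{MA}: the estimate $\tau_1\leq C_\gamma\tau^\gamma$ gives $\tau_1^\sigma\lesssim \tau^{\gamma\sigma}$, and $\tau_k\leq C_\gamma\tau\,t_k^{1-1/\gamma}$ combined with $t_{k-1}\asymp t_k$ yields, after substitution into $(t_k-t_1)^\beta t_{k-1}^{\sigma-2}\tau_k^{2-\beta}$, the uniform bound $\tau^{\min\{2-\beta,\gamma\sigma\}}$.

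The main obstacle I anticipate is the last step: bounding $\max_{2\leq k\leq n}(t_k-t_1)^\beta t_{k-1}^{\sigma-2}\tau_k^{2-\beta}$ uniformly by $\tau^{\min\{2-\beta,\gamma\sigma\}}$. The difficulty is that the factor $t_{k-1}^{\sigma-2}$ is most singular when $k$ is small (near the initial layer), while $(t_k-t_1)^\beta\tau_k^{2-\beta}$ grows with $k$. These competing behaviors must be balanced through the two-parameter scaling in \textbf{MA}, and one must distinguish the regime $\gamma\sigma\leq 2-\beta$ (where mesh grading dominates and the $\tau_1$-contribution sets the rate) from the regime $\gamma\sigma>2-\beta$ (where the interior error $\tau_k^{2-\beta}$ dominates); the argument relies crucially on $t_k\leq C_\gamma t_{k-1}$ to avoid logarithmic losses when substituting the MA bounds.
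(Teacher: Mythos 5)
The paper itself does not prove this lemma --- it is imported verbatim from \cite[Lemma 3.3]{LiaoL1} and \cite[Lemma 3.3]{LiaoYanZhang2018} --- so your reconstruction must be measured against those proofs. Your skeleton (interval-by-interval splitting of ${\cal R}^j$, a direct estimate on the singular first subinterval yielding $\tau_1^\sigma/\sigma$, interpolation error plus the local bound $|g''|\leq C_gt_{k-1}^{\sigma-2}$ on later subintervals, and a final case split between $\gamma\sigma\leq 2-\beta$ and $\gamma\sigma>2-\beta$ under {\bf MA}) is the right one, and the closing calculus for $\tau^{\min\{2-\beta,\gamma\sigma\}}$ is essentially correct. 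However, two of the middle steps have genuine gaps. First, the bound $|I_k^j|\leq C\,C_g\,t_{k-1}^{\sigma-2}\tau_k^2\int_{t_{k-1}}^{t_k}\omega_{-\beta}(t_j-s)\zd s$ is vacuous for $k=j$: after integration by parts the kernel behaves like $(t_j-s)^{-\beta-1}$, which is not integrable on $[t_{j-1},t_j]$, and no ``regularized sense'' rescues an estimate in which the vanishing of $g-\Pi_{1,j}g$ at $s=t_j$ has already been discarded. The current interval must be handled separately --- e.g.\ without integration by parts, via $|g'-(\Pi_{1,j}g)'|\leq\tau_jC_gt_{j-1}^{\sigma-2}$ integrated against $\omega_{1-\beta}(t_j-s)$, which is precisely where the exponent $2-\beta$ in the stated bound originates --- or the factor $(t_j-s)$ in the interpolation error must be retained inside the integral.

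Second, and more seriously, the summation mechanism you describe does not close. The history terms $I_k^j$ with $k<j$ carry factors of the form $(t_j-t_k)^{-\beta}-(t_j-t_{k-1})^{-\beta}$, not $\omega_{1-\beta}(t_j)$, so the inequality $\sum_jP_{n-j}^{(n)}\omega_{1-\beta}(t_j)\leq\pi_A$ from \eqref{P-proper} does not apply to them; and if you telescope in $k$ and pull out $\max_k t_{k-1}^{\sigma-2}\tau_k^2$ before summing in $j$, you are left with $\sum_{j}P_{n-j}^{(n)}\tau_j^{-\beta}\leq\pi_A\Gamma(2-\beta)\,n$, i.e.\ an extra factor of $n$. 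The cited proofs avoid this by first establishing the structured local bound $|{\cal R}^j|\leq A_0^{(j)}G^j+\sum_{k=1}^{j-1}\bigl(A_{j-k-1}^{(j)}-A_{j-k}^{(j)}\bigr)G^k$ with $G^k:=2\int_{t_{k-1}}^{t_k}(s-t_{k-1})|g''(s)|\zd s$, and then invoking the identity $\sum_{j=k}^nP_{n-j}^{(n)}A_{j-k}^{(j)}=1$, which follows directly from the definition \eqref{Pnk} and collapses the double sum exactly --- this is what produces a $\max_{2\leq k\leq n}$ rather than a sum over $k$ in the final estimate. Without this (or an equivalent replacement), your argument does not reach the claimed bound.
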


Define the functions ${\cal T}_{v1}^{n}({\bf x})$ and ${\cal T}_u^{n}({\bf x})$ by $({\cal T}_{v1})_h^n:={\cal T}_{v1}^{n}({\bf x}_h)$ and $({\cal T}_{u})_h^n:={\cal T}_u^{n}({\bf x}_h)$ respectively.
Using similar techniques for \eqref{PTf}, and  Lemma \ref{L1-error} with the in assumptions \eqref{regularity} and {\bf MA}, we have
\begin{align}\label{PTv10-u0}
\sum_{j=1}^n P_{n-j}^{(n)}\|\Delta_h{\cal T}_{v1}^{n}\| \leq C\tau^{\min\{2-\beta,\gamma\sigma_2\}}
\quad\mbox{and} \quad
\sum_{j=1}^n P_{n-j}^{(n)}\|\Delta_h{\cal T}_{u}^{n}\| \leq C\tau^{\min\{2-\beta,\gamma\sigma_1\}}.
\end{align}
$\bullet$ {\bf For Alikhanov approximation ($\theta=\beta/2$):}

The global consistency error estimate of the Alikhanov approximation is estimated in the next lemma.
\begin{lemma}(\cite[Lemma 3.6]{LiaoSecondOrder})\label{Alikhanov-error}
Assume that $g\in C^3((0,T])$ and there exists a constant $C_g>0$ such that
$$|g'''(t)|\leq C_g(1+t^{\sigma-3}), \quad 0<t\leq T,$$
where $\sigma\in(0,1)\cup (1,2)$ is a regularity parameter. Then
\begin{align*}
\sum_{j=1}^n P_{n-j}^{(n)}|{\cal R}^{j-\theta}|\leq C_g\left(\tau_1^\sigma/\sigma+t_1^{\sigma-3}\tau_2^3+\frac1{1-\beta}\max_{2\leq k\leq n}t_k^\beta t_{k-1}^{\sigma-3}\tau_k^3/\tau_{k-1}^\beta \right).
\end{align*}
\end{lemma}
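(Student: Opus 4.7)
My plan is to estimate the pointwise local truncation error $|\mathcal{R}^{j-\theta}|$ first and then apply the complementary–kernel summation lemma, as is done for the L1 analogue in Lemma~\ref{L1-error}. The starting point is the representation
\begin{align*}
\mathcal{R}^{j-\theta} = \sum_{k=1}^{j-1}\int_{t_{k-1}}^{t_k}\omega_{1-\beta}(t_{j-\theta}-s)\bigl(g'(s)-(\Pi_{2,k}g)'(s)\bigr)\zd s + \int_{t_{j-1}}^{t_{j-\theta}}\omega_{1-\beta}(t_{j-\theta}-s)\bigl(g'(s)-(\Pi_{1,j}g)'(s)\bigr)\zd s,
\end{align*}
obtained by subtracting the definition \eqref{AAA-discretization} from the exact Caputo derivative. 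I will split the quadratic-interpolation contribution further, isolating the intervals $k=1$ and $k=2$ from the generic $3\le k\le j-1$, since $g'''$ carries the singularity at $t=0$.

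For a generic index $k\ge 2$, I would use a Peano-kernel representation of the quadratic-interpolation error, writing $g(s)-\Pi_{2,k}g(s)$ as a weighted integral of $g'''$ on a neighbourhood of $[t_{k-1},t_k]$ and then integrating by parts once against $\omega_{1-\beta}$. Combined with the regularity bound $|g'''(t)|\le C_g(1+t^{\sigma-3})$ and the monotonicity of $\omega_{1-\beta}(t_{j-\theta}-\cdot)$, this produces a local bound of the shape $C_g\,(1+t_{k-1}^{\sigma-3})\tau_k^3 /\tau_{k-1}^{\beta-1}$ multiplied by the local convolution weight; the factor $1/\tau_{k-1}^{\beta-1}$ arises because the quadratic interpolant on $[t_{k-1},t_k]$ uses data from $[t_{k-2},t_k]$, so the standard interpolation-error bound carries $(\tau_{k-1}+\tau_k)$ in the denominator of the divided difference. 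For the leftmost interval ($k=1$) one only has linear information, so I would bound the contribution by $C_g\tau_1^\sigma/\sigma$ directly via $|g'(s)|\le C_g(1+s^{\sigma-1})$ integrated against $\omega_{1-\beta}$. The case $k=2$ is intermediate and gives the term $C_g\,t_1^{\sigma-3}\tau_2^3$ recorded in the statement. The terminal linear-interpolation piece on $[t_{j-1},t_{j-\theta}]$ is of the same order as the $k=j-1$ quadratic piece and is absorbed into the $\max_{2\le k\le n}$ term.

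Assembling these pieces, I obtain
\begin{align*}
|\mathcal{R}^{j-\theta}|\le C_g\Bigl(\tau_1^{\sigma}\omega_{1-\beta}(t_{j-\theta}-t_0) + t_1^{\sigma-3}\tau_2^3\,\omega_{1-\beta}(t_{j-\theta}-t_1)+\frac{1}{1-\beta}\max_{2\le k\le j}t_{k-1}^{\sigma-3}\tau_k^3/\tau_{k-1}^{\beta}\cdot \Phi_k^{(j)}\Bigr),
\end{align*}
where $\Phi_k^{(j)}$ encodes the local convolution weight. Multiplying by $P_{n-j}^{(n)}$ and summing in $j$, I then invoke the key property from \eqref{P-proper}, namely $\sum_{j=1}^n P_{n-j}^{(n)}\omega_{1-\beta}(t_j)\le \pi_A$, together with $P_{n-j}^{(n)}\le \pi_A\Gamma(2-\beta)\tau_j^\beta$ and the telescoping/monotonicity structure inherited from assumption \textbf{MA}, in order to convert the $\max$-expression into the form stated in the lemma, with the $t_k^\beta$ factor produced by collapsing the complementary–kernel sum up to level $k$.

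The main obstacle will be the careful bookkeeping in the quadratic-interpolation error on the interval $[t_{k-1},t_k]$, since the quadratic interpolant depends on three nodes whose spacing is governed by $\rho_{k-1}$ and $\rho_k$; controlling the resulting $\tau_k^3/\tau_{k-1}^\beta$ ratio uniformly requires the full strength of assumption \textbf{MA} (in particular $\tau_k/t_k\le C_\gamma \tau_{k-1}/t_{k-1}$) rather than simply $\rho\le 7/4$, and this is precisely the estimate that makes the Alikhanov scheme non-trivial compared with the L1 case.
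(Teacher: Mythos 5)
First, a point of comparison: the paper itself does not prove this lemma at all --- it is quoted, with citation, directly from \cite[Lemma 3.6]{LiaoSecondOrder}, so there is no in-paper argument to measure your sketch against; it has to stand against the proof in that reference. Your overall plan --- decompose $\mathcal{R}^{j-\theta}$ into interpolation errors weighted by $\omega_{1-\beta}$, treat the initial cell with a cruder bound based on $g'$, bound the generic cell via the error of $\Pi_{2,k}$ against $g'''$, and then sum against the complementary kernels using \eqref{P-proper} --- is indeed the shape of the argument there, and the grouping of terms you predict (a $\tau_1^{\sigma}/\sigma$ piece, a $t_1^{\sigma-3}\tau_2^3$ piece, and a $\max_k$ piece) matches the stated conclusion.

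Two concrete points in your sketch are wrong, however. (i) The stencil of $\Pi_{2,k}$ is $\{t_{k-1},t_k,t_{k+1}\}$, not $\{t_{k-2},t_{k-1},t_k\}$: this is visible in the coefficient $b_{n-k}^{(n)}$, whose normalization is $2/[\tau_k(\tau_k+\tau_{k+1})]$, and in the appearance of $\rho_{k-1}=\tau_{k-1}/\tau_k$ in \eqref{L2coe}. Consequently your explanation of the factor $\tau_{k-1}^{-\beta}$ --- as arising from a divided difference over $[t_{k-2},t_k]$ --- cannot be correct; the Peano representation of the $\Pi_{2,k}$-error must involve $g'''$ on $[t_{k-1},t_{k+1}]$, the singular weight becomes $t_{k-1}^{\sigma-3}$ only after using $t_{k+1}\le C t_{k-1}$, and the $\tau_{k-1}^{-\beta}$ is produced by the interaction of the near-singular part of $\omega_{1-\beta}(t_{j-\theta}-\cdot)$ on the last cell with the summation against $P_{n-j}^{(n)}$, not by the interpolation stencil. (ii) This lemma, unlike Lemmas \ref{theta-error} and \ref{L1-error}, does \emph{not} assume \textbf{MA}: its conclusion is the explicit bound containing $\max_k t_k^\beta t_{k-1}^{\sigma-3}\tau_k^3/\tau_{k-1}^\beta$, and \textbf{MA} is invoked only afterwards (in \eqref{PTv1}--\eqref{PTu}) to convert that bound into a rate $\tau^{\min\{3-\beta,\gamma\sigma\}}$. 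So your closing claim that the estimate "requires the full strength of assumption \textbf{MA}" contradicts the statement being proved; what is needed here is only the kernel properties \textbf{A1}--\textbf{A2} with $\pi_A=11/4$ and the step-ratio restriction $\rho\le 7/4$. A smaller inaccuracy: for $j\ge 2$ the first cell also carries a quadratic interpolant (only the case $j=1$ is purely linear), so the $\tau_1^{\sigma}/\sigma$ term comes from deliberately discarding the $g'''$-bound where it is useless near $t=0$, not from "having only linear information".
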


By Lemma \ref{Alikhanov-error}, Lemma \ref{theta-error}, the assumptions in \eqref{regularity} and {\bf MA}, it is easy to get that
\begin{align}\label{PTv1}
&\sum_{j=1}^nP_{n-j}^{(n)} \|\Delta_h({\cal T}_{v1})_h^{j-\theta}\|
\leq C\left(\tau_1^{\sigma_2}+\tau_2^3\tau_1^{\sigma_2-3}+\max_{2\leq k\leq n} (t_k-t_1)^{\beta}t_{k-1}^{\sigma_2-3}\tau_k^{3-\beta} \right)\leq C\tau^{\min\{3-\beta,\gamma\sigma_2\}},\\\label{PTu}
&\sum_{j=1}^nP_{n-j}^{(n)} \|\Delta_h({\cal T}_u)_h^{j-\theta}\|
\leq C\left(\tau_1^{\sigma_1}+\tau_2^3\tau_1^{\sigma_1-3}+\max_{2\leq k\leq n} (t_k-t_1)^{\beta}t_{k-1}^{\sigma_1-3}\tau_k^{3-\beta} \right)\leq C\tau^{\min\{3-\beta,\gamma\sigma_1\}},\\\label{PTw}
&\sum_{j=1}^nP_{n-j}^{(n)} \|\Delta_h({\cal T}_w)_h^{j-\theta}\|
\leq C\left(\tau_1^{\sigma_1+\beta}+\max_{2\leq k\leq n}  t_{k-1}^{\sigma_1-2}\tau_k^2 \right)\leq C\tau^{\min\{2,\gamma\sigma_1\}},\\\label{PTv2}
&\sum_{j=1}^nP_{n-j}^{(n)} \|\Delta_h({\cal T}_{v2})_h^{j-\theta}\|
\leq C\left(\tau_1^{\sigma_2+\beta}+\max_{2\leq k\leq n}  t_{k-1}^{\sigma_2-2}\tau_k^2 \right)\leq C\tau^{\min\{2,\gamma\sigma_2\}}.
\end{align}

\bibliographystyle{siamplain}
\bibliography{references}

\end{document}